\newtheorem{thm}{Theorem}[section]
\newtheorem{theorem}{Theorem}
\newtheorem{cor}[thm]{Corollary}
\newtheorem{lm}[thm]{Lemma}
\newtheorem{pr}[thm]{Proposition}
\theoremstyle{definition}
\newtheorem{df}[thm]{Definition}
\theoremstyle{remark}
\newtheorem{rem}[thm]{Remark}
\author{Karol Duda \\ \\ with an appendix by Karol Duda and Aleksander Ivanov}
\title{Amenability and computability}
\date{}
\begin{document}
\maketitle

\begin{abstract}

In this paper we extend the approach of M. Cavaleri to effective amenability 
to the class of computably enumerable groups, 
i.e. in particular we do not assume that groups are finitely generated. 
The main results of the paper concern some new directions in this approach. 
In the case of computable groups we study decidability of amenability of finitely generated subgroups, 
complexity of the set of all effective F\o lner sequences and effective paradoxical decomposition.

In the appendix we attach a version of the paper "On decidability of amenability in computable groups" by K.  Duda and A. Ivanov which has been already published. 
\end{abstract}

\section{Introduction}

M. Cavaleri has shown in \cite{MC3} that every amenable finitely generated recursively presented group has computable Reiter functions and subrecursive F\o lner functions.
 Moreover, for a finitely generated recursively presented group with solvable Word Problem, amenability is equivalent to these conditions and in fact it is equivalent 
to so called effective amenability. 
The latter means existence of an algorithm which finds $n$-F\o lner sets for all $n$.

Since being finitely generated is not necessary for amenability, the question arises what happens if we consider the case of recursively presented groups without the assumption of finite generation.
According the approach of computable algebra the question concerns the class of computably enumerable groups and the subclass of computable groups, which corresponds to decidability of the Word Problem.

The following Theorem generalizes some results of Cavaleri to a case of computably enumerable groups:

\begin{theorem} \label{1} 
Let $G$ be a computably enumerable group.
The following conditions are equivalent:
\begin{enumerate}[(i)]
\item $G$ is amenable;
\item $G$ has computable Reiter functions;
\item $G$ has subrecursive F\o lner function.
\item $G$ is $\Sigma$-amenable (see Definition \ref{cea}).
\end{enumerate}
Moreover, computable amenability of $G$ implies computability of it.
\end{theorem}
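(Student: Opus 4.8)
The plan is to prove the four equivalences around the cycle (i) $\Rightarrow$ (ii) $\Rightarrow$ (iii) $\Rightarrow$ (iv) $\Rightarrow$ (i), and then to treat the last sentence separately. Three of these links are the soft directions. For (iv) $\Rightarrow$ (i), a procedure that outputs $n$-F\o lner sets for every $n$ is in particular a witness of the F\o lner criterion, so $G$ is amenable. For (ii) $\Rightarrow$ (iii) I would run the classical passage from Reiter functions to F\o lner sets via the level-set identity
\[
\|s\mu-\mu\|_1=\int_0^\infty \bigl|\mathbf 1_{s\{\mu>t\}}-\mathbf 1_{\{\mu>t\}}\bigr|\,dt ,
\]
which forces some superlevel set $\{g:\mu_n(g)>t\}$ to be a good F\o lner set; since the $\mu_n$ are computable with a computable bound on their support, tracking the sizes of these superlevel sets yields a subrecursive bound on the F\o lner function. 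For (iii) $\Rightarrow$ (iv), a computable upper bound $N(n)$ on the size of a minimal $n$-F\o lner set confines the search: one dovetails through finite lists of enumerated elements of size at most $N(n)$ and through the enumeration of equalities, halting when a list is certified to satisfy the $n$-F\o lner inequality. Because such a set exists within the bound, the search terminates, giving $\Sigma$-amenability.

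The substantial direction is (i) $\Rightarrow$ (ii). Since $G$ is computably enumerable, the word problem and the graph of multiplication are $\Sigma_1$, so once a finitely supported rational candidate measure $\mu$ is fixed, the assertion that $\mu$ is $\varepsilon$-almost invariant under the first $n$ enumerated elements of $G$ is semidecidable against the growing portion of the enumerated equality relation. The classical equivalence of amenability with the Reiter property guarantees that, for every finite $S$ and every $\varepsilon$, such a witness exists. I would therefore dovetail a search over finitely supported rational $\mu$ and over the enumeration, uniformly in $n$, and read off computable Reiter functions $g\mapsto\mu_n(g)$ from the successive witnesses.

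The main obstacle is the asymmetry built into a computably enumerable presentation: equality of elements is only semidecidable, so one can never be sure that an enumerated list of words has the cardinality it appears to have, nor count $|sF\triangle F|$ exactly, since collapses among words may be discovered arbitrarily late. The resolution, and the technical heart of the argument, is that the $\ell^1$-Reiter formulation is monotone under passing to a further quotient: if $q$ denotes the map from words to $G$ and $\mu$ is supported on words, then $\|s\cdot q_*\mu-q_*\mu\|_1\le\|s\mu-\mu\|_1$, because pushing forward amalgamates masses and can only shrink the $\ell^1$ norm of the difference. Hence a positive verification carried out at the level of words, using only confirmed equalities and otherwise treating elements as distinct, soundly certifies the genuine inequality in $G$; undetected collapses can only help. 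This monotonicity is exactly what makes the semidecidable searches above sound without deciding inequality, and it is the point at which the computably enumerable hypothesis is used and where the proof departs from the finitely generated setting of Cavaleri.

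Finally, for the closing sentence, computable amenability differs from $\Sigma$-amenability in demanding that verification of the F\o lner condition be decidable rather than merely confirmable. Deciding the F\o lner inequality for produced candidates forces exact computation of the cardinalities $|F|$ and $|sF\triangle F|$, which in turn requires deciding when two enumerated elements coincide; thus equality becomes decidable. A computably enumerable group with decidable equality is computable, so $G$ is computable, as claimed.
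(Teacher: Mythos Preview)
Your treatment of the equivalence (i)--(iv) is essentially the paper's argument. The key technical point you isolate---monotonicity of the $\ell^1$-Reiter ratio under pushforward---is exactly the content of the paper's partition computation: the quantity $M_\Pi^x(f)$ decreases as the partition $\Pi$ of $\mathrm{supp}(f)$ is coarsened toward the canonical partition by $\nu$-fibres, so any inequality certified at an intermediate stage of the enumeration of equalities is sound for the true pushforward. One small wrinkle: your implication (iii)$\Rightarrow$(iv) proposes to halt ``when a list is certified to satisfy the $n$-F\o lner inequality,'' but the F\o lner ratio $|\nu(F)\setminus x\nu(F)|/|\nu(F)|$ is \emph{not} monotone under discovered collapses (both numerator and denominator can move), so direct certification does not work. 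The paper sidesteps this by going (ii)$\Rightarrow$(iv) instead: certify the Reiter condition for $\chi_F$ (which \emph{is} monotone), then invoke the level-set lemma to conclude that $\nu(F)$ contains a F\o lner subset. Your own monotonicity observation repairs this; you just need to route the certification through Reiter rather than F\o lner.

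The closing sentence, however, has a genuine gap. You argue that computable amenability ``demands that verification of the F\o lner condition be decidable,'' and that such a decision procedure would force equality to be decidable. But Definition~\ref{ca} does not require the algorithm to \emph{decide} anything: it merely promises an output $F$ with $\nu(F)\in\mathfrak{F}\text{\o}l_{G,\nu(D)}(n)$ and $|F|=|\nu(F)|$. An oracle could in principle hand you such an $F$ without any internal verification, so nothing in the definition immediately yields a decision procedure for equality of arbitrary pairs. The paper's argument is quite different and more constructive: given $n_1,n_2,n_3$, it requests a $4$-F\o lner set $F$ for $D=\{n_1,n_2,n_3\}$ with $|F|=|\nu(F)|$; the injectivity of $\nu$ on $F$ means the actions of $\nu(n_1),\nu(n_2),\nu(n_3)$ on $\nu(F)$ pull back to partial permutations of $F$, each defined on more than $\tfrac{3}{4}|F|$ points. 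One enumerates $\mathsf{MultT}$ until each partial permutation graph $\Sigma_l$ exceeds $\tfrac{3}{4}|F|$ edges, then checks whether the set of $i$ where $\Sigma_1$ followed by $\Sigma_2$ agrees with $\Sigma_3$ is nonempty (equivalently, has size at least $\tfrac{1}{4}|F|$) or empty; this decides $\nu(n_1)\nu(n_2)=\nu(n_3)$. The point is that the promise $|F|=|\nu(F)|$ is what makes the partial permutations well-defined on numbers, and the $4$-F\o lner condition guarantees enough overlap for the composition test to be conclusive. Your sketch does not supply this mechanism.
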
 

A paradoxical decomposition of a group is a triple $(K,(A_k)_{k\in K}, (B_k)_{k\in K})$ 
consisting families $A$ and $B$ of subsets of $G$ indexed by elements of a finite set $K\subset G$ such that:
$$
G = \Big(\bigsqcup\limits_{k\in K}kA_k\Big)\bigsqcup\Big(\bigsqcup\limits_{k\in K}kB_k\Big)=\Big(\bigsqcup\limits_{k\in K}A_k\Big)=\Big(\bigsqcup\limits_{k\in K}B_k\Big).
$$
Here we use the definition given in \cite{csc} where some members $A_k$ or $B_k$ can be empty. 
It is equivalent to the traditional one. 
Thus the existence of such a paradoxical decomposition is  opposite to amenability. 
By demanding families $(A_k )$ and $(B_k )$ to consist of computable sets, we introduce an effective paradoxical decomposition. 
Using an effective version of the Hall's Harem Theorem we prove the following theorem. 

\begin{theorem} \label{2} 
Let $G$ be a computable group. 
There is an effective procedure which given $K_0 \subset G$ 
such that for some natural $n$ there is no $n$-F\o lner set with respect to $K_0$, 
finds a finite $K$ with an effective paradoxical decomposition of $G$ as above.
\end{theorem}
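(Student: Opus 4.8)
The plan is to translate the desired paradoxical decomposition into a perfect $(2,1)$-matching of an associated bipartite graph and then invoke the effective Hall's Harem theorem. First I would carry out the combinatorial reduction. A paradoxical decomposition $(K,(A_k),(B_k))$ is the same datum as a pair of injections $\phi,\psi\colon G\to G$, where $\phi(g)=kg$ for $g\in A_k$ and $\psi(g)=kg$ for $g\in B_k$, subject to $\mathrm{im}(\phi)\sqcup\mathrm{im}(\psi)=G$. Indeed, $G=\bigsqcup_k A_k=\bigsqcup_k B_k$ says exactly that the two label maps ($g\mapsto$ the $k$ with $g\in A_k$, resp. $g\in B_k$) are total on $G$, while $G=\bigsqcup_k kA_k\sqcup\bigsqcup_k kB_k$ says the combined images tile $G$. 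I form the bipartite graph $H$ with both parts equal to $G$ and an edge joining $g$ in the source copy to $h$ in the target copy whenever $hg^{-1}\in K$; a set of edges in which every source vertex has degree $2$ and every target vertex has degree $1$ is precisely such a pair $(\phi,\psi)$, the two edges at $g$ carrying the labels used by $\phi$ and $\psi$.

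Next I would produce $K$ and verify the doubling (Hall's Harem) condition. Enlarging $K_0$ we may assume $e\in K_0=K_0^{-1}$: this only strengthens the hypothesis, since an $n$-F\o lner set with respect to a larger set is in particular one with respect to $K_0$. Failure of the $n$-F\o lner condition gives, for every finite $F$, some $s\in K_0$ with $|sF\setminus F|>\tfrac1n|F|$, so that $|K_0F|\ge(1+\tfrac1n)|F|$. Applying this with $F$ replaced successively by $K_0^{j}F$ yields
$$
|K_0^{m}F|\ \ge\ \Big(1+\tfrac1n\Big)^{m}|F|
$$
for all finite $F$; choosing $m$ with $(1+\tfrac1n)^{m}\ge 2$, a number computable from $n$, and setting $K=K_0^{m}$ gives $|KA|\ge 2|A|$ for every finite $A$. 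This is the harem condition on the source side of $H$. On the target side, $e\in K$ forces $B\subseteq K^{-1}B$, so $|K^{-1}B|\ge|B|\ge\tfrac12|B|$ holds automatically.

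Then I would appeal to the effective Hall's Harem theorem. Since $G$ is computable and $K=K_0^{m}$ is finite with decidable membership, $H$ is a computable locally finite bipartite graph: one can list the vertices, compute the neighbourhood $Kg$ of any $g$, and decide edges. The effective theorem, applied to $H$ with the verified doubling condition, outputs a computable perfect $(2,1)$-matching $M$. From $M$ I read off the decomposition: at each source vertex $g$, order its two matched targets by their codes, assigning the edge to the smaller one to $\phi$ and the other to $\psi$; each matched edge $(g,h)$ has a unique label $k=hg^{-1}\in K$, and I set $A_k=\{g:\phi\text{-edge of }g\text{ has label }k\}$ and $B_k=\{g:\psi\text{-edge of }g\text{ has label }k\}$. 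Membership in $A_k,B_k$ is decidable because $M$ is computable, and the two tiling identities follow immediately from the $(2,1)$-matching property, so $(K,(A_k),(B_k))$ is an effective paradoxical decomposition.

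The main obstacle is the effective Hall's Harem theorem itself, that is, producing the matching \emph{computably} rather than merely proving it exists: the classical argument passes through a compactness/K\"onig's-lemma step that is not effective, so the real work lies in constructing $M$ by an augmenting-path procedure that provably terminates, which is exactly where the strict, uniform doubling margin $|KA|\ge 2|A|$ (strengthenable to $\ge 2+\delta$ by enlarging $m$) should be used. A secondary point to check is that $m$ is obtained effectively from the data; granting $n$ from the hypothesis this is the elementary estimate above, and the remaining verifications — computability of $H$ and decidability of the pieces — are routine consequences of the decidable Word Problem in $G$.
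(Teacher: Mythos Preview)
Your proposal is correct and follows essentially the same route as the paper: pass from $K_0$ to a power $K=K_0^{m}$ with a uniform expansion bound, build the bipartite graph on two copies of $G$ with edges $g\sim h$ iff $hg^{-1}\in K$, apply the effective Hall's Harem theorem to get a computable perfect $(1,2)$-matching, and read off the pieces $A_k,B_k$ from the labels of the two matched edges at each vertex. The only substantive discrepancy is your expansion constant: choosing $m$ with $(1+\tfrac1n)^m\ge 2$ yields only $|KF|\ge 2|F|$, which is the bare Hall harem condition and does \emph{not} suffice for the paper's effective version, which requires the computable expanding condition $c.e.H.h.c.(2)$ (a recursive $h$ with $h(n)\le|X|\Rightarrow n\le |N(X)|-2|X|$). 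The paper takes $(1+\tfrac1n)^{n_1}\ge 3$, so that $|N(F)|-2|F|\ge|F|$ on the source side and $|N(F)|-\tfrac12|F|\ge\tfrac12|F|$ on the target side, and then $h(n)=2n$ witnesses $c.e.H.h.c.(2)$. You do flag the need for a margin (``strengthenable to $\ge 2+\delta$''), so the repair is immediate; just build it into the choice of $m$ from the start rather than leaving it as an afterthought.
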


\noindent We call such a set $K_0$ a {\bf witness} of the Banach-Tarski paradox. 
The question arises, how complex is the family (denoted by $\mathfrak{W}_{BT}$) of such subsets of a computable group? We prove the following theorem. 

\begin{theorem} \label{3}
For any computable group the family $\mathfrak{W}_{BT}$ 
belongs to the class $\Sigma^{0}_{2}$. 
In the case of the fully residually free groups the family $\mathfrak{W}_{BT}$ is computable.
\end{theorem}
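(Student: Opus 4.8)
The plan is to split the statement into its two assertions and to rest both on a single bridging observation: a finite set $K_0$ is a witness of the Banach--Tarski paradox precisely when the subgroup $H = \langle K_0 \rangle$ is non-amenable. For the $\Sigma^0_2$ bound I would first note that in a computable group the predicate $\mathrm{Fol}(F,n,K_0)$ expressing ``$F$ is an $n$-F\o lner set with respect to $K_0$'' is decidable: finite subsets $F, K_0 \subseteq G$ are coded as finite lists of elements, the maps $f \mapsto kf$ are computable, and since the Word Problem is solvable the quantities $|kF \triangle F|$ and $|F|$ are computed by deciding equalities, so the defining inequalities can be checked outright. By definition $K_0 \in \mathfrak{W}_{BT}$ holds exactly when $\exists n\, \forall F\, \neg\,\mathrm{Fol}(F,n,K_0)$, with $F$ ranging over codes of finite subsets of $G$; this is a $\Sigma^0_2$ formula over a decidable matrix, uniformly in $K_0$, giving $\mathfrak{W}_{BT} \in \Sigma^0_2$.

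Next I would prove the bridging lemma. One direction is immediate: if $H$ is amenable it has F\o lner sets at every level relative to the finite set $K_0 \subseteq H$, and these are in particular $n$-F\o lner subsets of $G$, so $K_0$ is not a witness. For the converse I would use a coset decomposition. Left multiplication by $k \in H$ fixes each right coset $Hc$, so for any finite $F \subseteq G$, writing $F_c = F \cap Hc$, we have $kF \cap Hc = kF_c$, whence $|kF \triangle F| = \sum_c |kF_c \triangle F_c|$ and $|F| = \sum_c |F_c|$. Summing the $n$-F\o lner inequalities over $k \in K_0$ and applying pigeonhole yields a single coset $c$ with $F_c$ nonempty for which, transported into $H$, $F_c$ is an $(n/|K_0|)$-F\o lner set of $H$ with respect to $K_0$. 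Hence if $G$ admits $n$-F\o lner sets with respect to $K_0$ for all $n$, then so does $H$, and since $K_0$ generates $H$ the F\o lner criterion gives amenability of $H$; contrapositively, $H$ non-amenable forces $K_0$ to be a witness.

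For a fully residually free group I would then invoke its structure theory. Subgroups of fully residually free groups are again fully residually free, and a finitely generated non-abelian fully residually free group contains a non-abelian free subgroup; consequently $H = \langle K_0 \rangle$ is amenable if and only if it is abelian. Since pairwise commuting generators generate an abelian group and conversely, $H$ is abelian exactly when $[k_i,k_j] = 1$ for all $k_i, k_j \in K_0$. As $G$ is computable this finite list of commutator identities is decidable, so by the bridging lemma membership in $\mathfrak{W}_{BT}$ is decidable: $K_0$ is a witness if and only if some pair in $K_0$ fails to commute. This shows $\mathfrak{W}_{BT}$ is computable in this case.

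I expect the main obstacle to be the converse half of the bridging lemma, namely controlling the F\o lner sets of the ambient $G$ by those of $H$: one must ensure the pigeonhole step genuinely delivers a F\o lner set concentrated in a single coset rather than a mere average estimate, and that the transported set is nonempty. A secondary point needing care is the precise structural input for fully residually free groups that guarantees non-abelianity yields a free subgroup, and hence non-amenability, for the finitely generated subgroup $\langle K_0 \rangle$.
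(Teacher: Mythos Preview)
Your proposal is correct and follows essentially the same three-part architecture as the paper: the $\Sigma^0_2$ bound via decidability of the F\o lner predicate in a computable group, the bridging lemma (Proposition~\ref{fif} in the paper) via right-coset decomposition and pigeonhole, and the fully residually free case by reducing to the decidable test ``do all pairs in $K_0$ commute?''. The only noteworthy difference is in the structural input for the last step: the paper argues directly from the definition of fully residually free that two non-commuting elements $x,y$ freely generate $F_2$ (by choosing a homomorphism $\phi$ to a free group with $\phi([x,y])\neq 1$, so $\phi(x),\phi(y)$ are a free basis and no nontrivial word in $x,y$ can vanish), whereas you invoke the higher-level facts that subgroups of fully residually free groups are fully residually free and that non-abelian such groups contain $F_2$. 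Both routes are valid and yield the same criterion; the paper's is slightly more self-contained, while yours packages the argument in a cleaner structural statement.
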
 
\noindent After this theorem a principal question arises if there is a computable group for which 
the family $\mathfrak{W}_{BT}$ is not computable. 
Moreover it is worth mentioning that the latter condition is equivalent to undecidability of the problem if a finite subset 
generates an amenable subgroup.  
The appendix of this paper gives a required example.  
Using it we also build a finitely presented group 
with decidable word problem where the family $\mathfrak{W}_{BT}$ is not computable. 
This shows that the statement of Theorem \ref{3} cannot be extended to 
finitely presented groups with decidable word problem.

The paper is organized as follows.
Section 2 contains some basic definitions and preliminary observations.
In Sections 3 - 4 we generalize Cavaleri's characterizations 
(using very similar arguments) of some versions of effective amenability to the case of computably enumerable groups. 
In these sections we prove Theorem \ref{1}.
In Section 5 we study complexity of the set of effective F\o lner sequences for computable groups. 
Sections 6 - 8 are dedicated to the effectiveness of a paradoxical decomposition. 
In Section 6 we introduce and prove an effective version of the Hall's Harem Theorem. 
We use it to prove Theorem \ref{2}  in Section 7. 
In Section 8 we introduce a notion of a witnesses of the Banach-Tarski paradox and study the complexity of the set of witnesses (Theorem \ref{3}). 

We mention papers of I. Bilanovic, J. Chubb and S. Roven \cite{bcr}
and of A. Darbinyan \cite{darar} as other papers in the field (applied to other group-theoretic properties).

The material of this paper is based on the master thesis of the author, written under supervision of Aleksander Ivanov. 
The author is grateful to him for support. 
The author is grateful to M. Cavaleri and T. Ceccherini-Silberstein for reading the paper and helpful remarks. In particular, the idea of Proposition \ref{fif} belongs to M. Cavaleri. 

\section{Preliminaries}
From now on we identify each finite set $F\subset \mathbb{N}$ with its  G\" odel number.
For any sets $X$ and $Y$ we will write $X \subset\subset Y$ to denote that $X$ is a finite subset of $Y$.  
For any $i\in\mathbb{N}$, we denote the set $\{1,2,\ldots, i\}$ by $[i]$. 
Throughout this paper, $G$ is a countable group without any presumption about its generating set.
%%%%%%
%To generalize the notion of a finitely generated recursively presented group we use the notion of computably enumerable groups.
%In the class of computably enumerable groups we distinguish the subclass of computable groups, which corresponds to groups with solvable word problem.

\subsection{Computability}

We use standard material from the computability theory (see  \cite{sri}).
A function is \textbf{subrecursive} if it admits a computable total upper bound.
Sequence $(n_i)_{i\in\mathbb{N}}$ of natural numbers is called \textbf{effective}, if the function $k\rightarrow n_k$ is recursive.

Let $G$ be a countable group generated by some $X\subseteq G$. 
The group $G$ is called {\bf recursively presented} (see Section IV.3 in \cite{ls}) if 
$X$ can be identified with $\mathbb{N}$ (or with some $\{ 0,\ldots , n\}$) 
so that $G$ has a recursively enumerable set of relators in $X$.  
Below we give an equivalent definition, see Definition \ref{df1}. 
It is justified by a possibility identification of the whole $G$ with $\mathbb{N}$. 
We follow the approach of \cite{khmi}. 

\begin{df}\label{df0}
Let $G$ be a group and $\nu: \mathbb{N} \rightarrow G$ be a surjective function. 
We call the pair $(G,\nu)$ a \textbf{numbered group}.
The function $\nu$ is called a \textbf{numbering} of $G$.
If $g\in G$ and $\nu(n)=g$, then $n$ is called a number of $g$.
\end{df}

\begin{df}\label{comp_gr}
A numbered group $(G,\nu)$ has a \textbf{computable presentation} 
if $\nu$ is a bijection and 
the set 
$$
\mathsf{MultT} := \{(i,j,k): \quad \nu(i)\nu(j)=\nu(k)\}$$ 
is computable (=  decidable).
\end{df}

Any finitely generated group with decidable word problem 
obviously has a computable presentation. 
This also holds in the case of the free group $\mathbb{F}_{\omega}$ 
with the free basis $\{ x_0 , \ldots , x_i , \ldots \}$. 
If we fix a computable presentation $(\mathbb{F}_{\omega}, \nu_F )$
then for every recursively presented group $G= \langle X\rangle$ 
and a natural homomorphism $\rho : \mathbb{F}_{\omega} \rightarrow G$ 
(taking $\omega$ onto $X$) we obtain a numbering 
$\nu = \rho \circ \nu_F$ which satisfies the following definition. 

\begin{df}\label{df1}
A numbered group $(G,\nu)$ is \textbf{computably enumerable} if the set 
$$
\mathsf{MultT} := \{(i,j,k):\quad \nu(i)\nu(j)=\nu(k)\}$$ 
is computably enumerable.
\end{df} 

\begin{rem}\label{rp}
Let $(G,\nu)$ be a computably enumerable group.

\begin{enumerate}[(i)]
\item There exists a computable function $\star:\mathbb{N}\times \mathbb{N} \rightarrow \mathbb{N}$ such that for all $ x,y \in \mathbb{N}$ the equality $\nu(x)\nu(y) = \nu(x\star y)$ holds. 
\item There is a computable function which for every $x\in \mathbb{N}$  finds $y\in \mathbb{N}$ with $\nu(x)\nu(y)=1$, i.e. 
a number $(\nu(x))^{-1}$. 
We denote it by $x^{*}$.
\item The sets $\{n:\nu(n)=1\}$ and $\{(n_1,n_2): \nu(n_1)=\nu(n_2)\}$ are computably enumerable.
\end{enumerate} 

\end{rem}

\begin{rem} 
 If $(G, \nu )$ is a numbered group and the set $\mathsf{MultT}$ 
from Definition \ref{df1} is computable, then 
\begin{enumerate}[(i)] 
\item $G$ has a computable presentation (possibly under another numbering). 
Indeed, in this case the set of the smallest numbers of the elements of $G$ is computable. 
Enumerating this set by natural numbers we obtain 
a required  $1$-$1$-enumeration. 
\item In this case we also have that the set $\{(n_1,n_2): \nu(n_1)=\nu(n_2)\}$ is computable. 
\end{enumerate} 

\end{rem}

Groups $(G, \nu )$ as in this remark are called {\bf computable} groups. 
They correspond to groups with solvable word problem.
In this case the numbering $\nu$ is called a {\bf constructivization}.

\subsection{Amenability}

Let $G$ be a group, and $D\subset\subset G$.
Given $n\in \mathbb{N}$, we say that a subset $F\subset\subset G$ is an $n$-\textbf{F\o lner set} with respect to $D$ if 
\begin{equation}
\forall x\in D\hspace{0.5cm}  \frac{|F\setminus xF|}{|F|} \leq \frac{1}{n}
\end{equation}   
\newline
We denote by $\mathfrak{F}$\o$l_{G, D}(n)$ the set of all $n$-F\o lner sets with respect to $D$.
Moreover, we say that a sequence $(F_j)_{j\in \mathbb{N}}$ 
of non-empty finite subsets of $G$ is a \textbf{F\o lner sequence} 
if for every $g \in G$ the following condition holds:
\begin{equation}
 \lim\limits_{j\rightarrow \infty}\frac{|F_j\setminus gF_j|}{|F_j|}=0.
\end{equation} 
We call the binary function:
\begin{equation}
F\o l_{G}(n,D) = min\{|F|: F\subseteq G \text{ such that } F \in \mathfrak{F} \o l_{G, D}(n)\},
\end{equation}
\newline
where the variable $D$ corresponds to finite sets, the \textbf{F\o lner function of} $G$, \cite{ver}.    
%It measures the cardinality of the smallest F\o lner sets.

It is easy to see that existence of F\o lner sets for every $D$ and all $n$ 
is equivalent to existence of a F\o lner sequence, i.e.  
$G$ admits a F\o lner sequence if and only if 
$F\o l_{G}(n,D)< \infty$ for all finite $D\subset G$ and $n\in\mathbb{N}$.  
In fact this is the \textbf{F\o lner condition of amenability}.

\begin{df}
A summable non-zero function $h: G \rightarrow \mathbb{R}_{+}$, $||h||_{1,G}<\infty$, is $n$-\textbf{invariant} with respect to $D$, if 
\begin{equation}\label{rse}
\forall x\in D\hspace{0.5cm} \frac{||h - _{x}h||_{1,G}}{||h||_{1,G}}< \frac{1}{n},
\end{equation}
where $_{x}h(g):=h(x^{-1}g)$.
\newline
We denote by $\mathfrak{R}eit_{G, D}(n)$ the set of all summable non-zero functions from $G$ to $\mathbb{R}_{+}$, which are $n$-invariant with respect to $D$. 
\end{df}

\noindent The following facts are well known and/or easy to prove. 

\begin{lm}\label{fr} Let $F,D\subset\subset G$.

\begin{enumerate}[(i)]
\item $F \in \mathfrak{F} \o l_{G, D}(n) \implies \forall g \in G \quad Fg \in F \o l_{G, D}(n)$

\item $F \in \mathfrak{F} \o l_{G, D}(n) \iff \forall x \in D  \quad \frac{|F\cap xF|}{|F|}> 1 - \frac{1}{n}$

\item $F \in \mathfrak{F} \o l_{G, D}(2n) \iff \chi_F \in \mathfrak{R}eit_{G, D}(n)$

\item If $h\in \mathfrak{R}eit_{G,D}(n)$ has a finite support then there exists $F \subset Supp(h)$ such that for all $x \in D$ following holds: $$ \frac{|F\setminus xF|}{|F|}< \frac{|D|}{2n}.$$

\end{enumerate}
\end{lm}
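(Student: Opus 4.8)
\noindent The first three parts are immediate computations from the definitions, while part (iv) is the substantial one and rests on a layer-cake argument.

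For (i), I would use that right translation by $g$ is a cardinality-preserving bijection of $G$, so $|Fg|=|F|$, together with the identity $x(Fg)=(xF)g$, which gives $Fg\setminus x(Fg)=(F\setminus xF)g$ and hence $|Fg\setminus x(Fg)|=|F\setminus xF|$. The ratio defining the $n$-F\o lner condition is therefore literally unchanged, so it passes from $F$ to $Fg$. For (ii), the plan is to split $F=(F\cap xF)\sqcup(F\setminus xF)$, which yields
\[
\frac{|F\setminus xF|}{|F|}=1-\frac{|F\cap xF|}{|F|},
\]
so that the bound $\frac{|F\setminus xF|}{|F|}\le\frac1n$ is equivalent, via this identity, to the asserted lower bound on $\frac{|F\cap xF|}{|F|}$.

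For (iii), I would compute the Reiter data of $\chi_F$ directly. One has $\|\chi_F\|_{1,G}=|F|$ and ${}_x\chi_F=\chi_{xF}$, whence $\|\chi_F-{}_x\chi_F\|_{1,G}=|F\triangle xF|$. Since $|xF|=|F|$ forces $|F\setminus xF|=|xF\setminus F|$, the symmetric difference satisfies $|F\triangle xF|=2|F\setminus xF|$, and therefore
\[
\frac{\|\chi_F-{}_x\chi_F\|_{1,G}}{\|\chi_F\|_{1,G}}=\frac{2|F\setminus xF|}{|F|}.
\]
Requiring the left-hand side to be below $\frac1n$ is exactly the $2n$-F\o lner inequality, which explains the factor $2$ in the statement.

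Part (iv) is where the real work lies, and I would argue by the layer-cake (Cavalieri) representation of $h$ through its super-level sets $E_t:=\{g\in G:h(g)>t\}$, all finite because $h$ has finite support. Noting that $xE_t$ is precisely the super-level set of ${}_xh$, the pointwise identities $h(g)=\int_0^\infty\chi_{E_t}(g)\,dt$ and $|h(g)-{}_xh(g)|=\int_0^\infty|\chi_{E_t}(g)-\chi_{xE_t}(g)|\,dt$ (both integrands being indicators of half-lines in $t$) sum over $g\in G$ to give $\|h\|_{1,G}=\int_0^\infty|E_t|\,dt$ and, for each $x\in D$,
\[
\|h-{}_xh\|_{1,G}=\int_0^\infty|E_t\triangle xE_t|\,dt=2\int_0^\infty|E_t\setminus xE_t|\,dt,
\]
the last equality being the computation from (iii). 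Summing the Reiter inequalities $\|h-{}_xh\|_{1,G}<\frac1n\|h\|_{1,G}$ over $x\in D$ and substituting these formulas yields
\[
\int_0^\infty\Big(2\sum_{x\in D}|E_t\setminus xE_t|-\frac{|D|}{n}|E_t|\Big)\,dt<0.
\]
Because the integral is strictly negative, the integrand must be strictly negative for some $t$; as it vanishes whenever $E_t=\emptyset$, such a $t$ has $E_t\neq\emptyset$. Setting $F:=E_t$ then gives $\sum_{x\in D}|F\setminus xF|<\frac{|D|}{2n}|F|$, and since every summand is nonnegative, each term obeys $\frac{|F\setminus xF|}{|F|}<\frac{|D|}{2n}$, as required. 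As $h$ takes finitely many values, these integrals are in fact finite sums over those values, so the selection step is just the pigeonhole principle: a positively weighted average that is negative must have a negative entry. The main obstacle is exactly this layer-cake identity for $\|h-{}_xh\|_{1,G}$ and the attendant bookkeeping — getting the factor $2$, the symmetric differences, and the non-emptiness of the chosen level set right; the remaining parts are routine.
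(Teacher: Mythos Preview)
Your argument is correct; the paper itself offers no proof of this lemma, simply declaring the facts ``well known and/or easy to prove,'' and your layer-cake (Namioka) argument for (iv) is indeed the standard route to such statements. One minor remark, concerning the statement rather than your proof: the paper defines the F\o lner condition with a weak inequality $\le\frac1n$ while (ii) and (iii) are phrased with strict inequalities, so the biconditionals as literally written are off by the boundary case; your identities in (ii) and (iii) correctly deliver the versions with matching inequality signs.
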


\subsection{Effective amenability}

In this section $(G,\nu)$ is a numbered group.

\begin{df}\label{cr} 
We say that $(G,\nu)$ has \textbf{computable Reiter functions}, if there exists an algorithm which, for every $n\in \mathbb{N}$ and any finite set $D\subset\mathbb{N}$ finds $f: \mathbb{N} \rightarrow \mathbb{Q_{+}}$, such that $|Supp(f)| < \infty$ and 
$$
\forall x \in D, \quad \frac{||\nu_{G *}(f) - _{\nu (x)}\nu_{G *}(f)||_{1,G}}{||\nu_{G *}(f)||_{1,G}}< \frac{1}{n}, 
$$
where $\nu_{G *}(f)(g):=\sum\limits_{i\in\nu^{-1}(g)}f(i)$.
\end{df}

In the case of the F\o lner condition of amenability, we consider three types of effectiveness.

\begin{df}\label{cea} 
The group $(G,\nu)$ is \textbf{$\Sigma$-amenable} if
there exists an algorithm which for all pairs $(n,D)$, where $n\in \mathbb{N}$ and 
$D\subset\subset\mathbb{N}$, finds a set $F\subset\subset\mathbb{N}$ containing 
a subset $ F'$, such that $ \nu(F') \in \mathfrak{F} \o l_{G, \nu(D)}(n)$.
\end{df}

\begin{df}\label{cf}
We say that $(G,\nu)$ has \textbf{computable F\o lner sets} 
if there exists an algorithm which, for all pairs $(n,D)$, 
where $n\in \mathbb{N}$ and $D\subset\subset\mathbb{N}$, 
finds a finite set $F\subset\mathbb{N}$ such that 
$\nu(F) \in \mathfrak{F}$\o$l_{G, \nu(D)}(n)$.
\end{df}

\begin{df}\label{ca} 
The group $(G,\nu)$ is \textbf{computably amenable} if
there exists an algorithm which for all pairs $(n,D)$, 
where $n\in \mathbb{N}$ and $D\subset\subset\mathbb{N}$, finds a set $F\subset\subset\mathbb{N}$ such that $\nu(F) \in \mathfrak{F} \o l_{G, \nu(D)}(n)$ and $|F|=|\nu(F)|$.
\end{df}

\section{Effective amenability of computably enumerable groups}

The main result of this section, Theorem \ref{re}, is a natural generalization of a theorem of 
M. Cavaleri from \cite{MC3} (Theorem 3.1) to the case of groups which are not finitely generated. 
In fact we use the same arguments (appropriately adapted to our case).
Throughout this section we assume that $(G,\nu)$ is a computably enumerable group.

We start with some preliminary material concerning Reiter functions and partitions. 
Let $X$ be a nonempty set. 
The family of sets $\Pi$ is a \textbf{partition of a set} $X$, if and only if all of the following conditions hold:
\begin{enumerate}
\item $\emptyset\notin \Pi$;
\item $\bigcup\limits_{A\in\Pi}A=X$;
\item $\forall A,B\in \Pi, A\neq B \implies A\cap B = \emptyset$.
\end{enumerate}
Partition $\Pi'$ is finer than partition $\Pi$ (denoted by $\Pi' \leq \Pi$), if for all $A'\in\Pi'$ there exists $A\in\Pi\text{, such that } A'\subset A$.

Let $f:\mathbb{N}\rightarrow\mathbb{Q}_{+}$ be a function with finite support $F$. 
Let $D$ be a finite subset of $\mathbb{N}$. 
With every partition $\Pi$ of the set $F$ and every $x\in D$ we associate the positive rational number:
$$
M_{\Pi}^{x}(f):= \frac{\sum_{V\in\Pi}|\sum_{v\in V}(f(v)-f(x^{*}\star v))|}{\sum_{v\in F}f(v)} , 
$$ 
where functions $\star$ and $^{*}$ are taken from Remark \ref{rp}. 
We denote by  $P$ the canonical partition of the set $F$, 
i.e. the partition into sets $ \{\nu^{-1}(\nu(k)), k\in F\}\cap F$. 
Then for every $x\in D$ we have 
\begin{equation}\label{5}
M_{P}^{x}(f) = \frac{||\nu_{G *}(f)-_{\nu(x)}\nu_{G *}(f)||_{1,G}}{||\nu_{G *}(f)||_{1,G}}.
\end{equation}
By the triangle inequality for any two partitions $\Pi, \Pi'$ of set $F$, $\Pi \leq \Pi'$ implies $M_{\Pi}^{x}(f) \geq M_{\Pi'}^{x}(f)$. 
In particular, for any partition  $\Pi \leq P$ and any $x\in D$ the following inequality holds:
\begin{equation}\label{6}
M_{\Pi}^{x}(f) \geq M_{P}^{x}(f).
\end{equation}

\begin{lm}\label{rr}

Let $(G,\nu)$ be a computably enumerable group. 
There exists a computable enumeration of the set of all triples $(n, D, f)$, where 
$D \subset\subset\mathbb{N}$ and $f:\mathbb{N}\rightarrow\mathbb{Q}_{+}$ is a finitely supported function, 
such that $\nu_{G *}(f) \in \mathfrak{R}eit_{G,\nu(D)}(n)$.

\end{lm}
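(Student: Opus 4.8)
The plan is to prove that the set of such triples is computably enumerable; since it is a subset of the computable set of all candidate triples, this is exactly what is needed to produce a computable enumeration. First I would fix an effective coding by natural numbers of all candidate triples $(n,D,f)$ with $D\subset\subset\mathbb{N}$ and $f:\mathbb{N}\to\mathbb{Q}_{+}$ finitely supported; these are finitary objects and can be listed effectively, so it suffices to semi-decide, uniformly in the triple, whether $\nu_{G *}(f)\in\mathfrak{R}eit_{G,\nu(D)}(n)$. By equation (\ref{5}) this membership is precisely the requirement that $M_{P}^{x}(f)<\tfrac{1}{n}$ for every $x\in D$, where $P$ is the canonical partition of $F:=\mathrm{Supp}(f)$.

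The hard part is that $P$ itself is not computable: its blocks are the classes of the equivalence $\nu(i)=\nu(j)$ restricted to $F$, and by Remark \ref{rp}(iii) this relation is only computably enumerable, so we cannot in general decide it. I would circumvent this by approximating $P$ from below. Starting from the partition of $F$ into singletons, I would enumerate the computably enumerable set $\{(i,j):\nu(i)=\nu(j)\}$ and, whenever an equality with $i,j\in F$ is discovered, merge the two current blocks containing $i$ and $j$. This yields a computable sequence of partitions $\Pi_{s}$, each of which refines $P$ (so $\Pi_{s}\le P$, since each of its blocks consists of indices of genuinely equal $\nu$-value and hence lies inside a block of $P$), and which stabilises at $P$ after finitely many stages because $F$ is finite and only finitely many equal pairs can ever be found. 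At every stage $M_{\Pi_{s}}^{x}(f)$ is an effectively computable rational: the functions $\star$ and $^{*}$ of Remark \ref{rp} are computable, and $f$ has finite rational support, so each inner sum in the definition of $M_{\Pi_{s}}^{x}(f)$, and the value outside $F$, is evaluated exactly.

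The semi-decision procedure then reads: at stage $s$, accept the triple as soon as $M_{\Pi_{s}}^{x}(f)<\tfrac{1}{n}$ holds for every $x\in D$. Its correctness rests on the monotonicity recorded in (\ref{6}): since $\Pi_{s}\le P$ we always have $M_{\Pi_{s}}^{x}(f)\ge M_{P}^{x}(f)$, so an acceptance at stage $s$ certifies $M_{P}^{x}(f)\le M_{\Pi_{s}}^{x}(f)<\tfrac{1}{n}$ for all $x$, i.e.\ the triple genuinely lies in the set. Conversely, if $M_{P}^{x}(f)<\tfrac{1}{n}$ for all $x\in D$, then from the stage at which $\Pi_{s}=P$ onward we have $M_{\Pi_{s}}^{x}(f)=M_{P}^{x}(f)<\tfrac{1}{n}$, so the procedure halts. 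Dovetailing these procedures over all coded triples produces the desired computable enumeration. The one conceptually delicate observation is that an \emph{over-refinement} of $P$ already suffices to certify the Reiter inequality, which is exactly the content of (\ref{6}); thus no exact knowledge of the undecidable partition $P$ is ever required.
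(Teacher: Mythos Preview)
Your proposal is correct and follows essentially the same argument as the paper: start from the singleton partition of $F=\mathrm{Supp}(f)$, merge blocks as equalities $\nu(i)=\nu(j)$ are enumerated via Remark~\ref{rp}(iii), and accept once $M_{\Pi_s}^{x}(f)<\tfrac{1}{n}$ for all $x\in D$, using the monotonicity~(\ref{6}) for soundness and eventual stabilisation at $P$ for completeness. The paper's procedure $\kappa(n,D,f)$ is exactly this, so there is nothing to add.
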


\begin{proof}
We apply the method of Theorem 3.1($(i)\rightarrow (iv)$) of \cite{MC3}. 
Let us fix an enumeration of functions $f_i$ with finite support and 
the corresponding enumeration of all triples of the form $(n_i,D_j,f_k)$. 
The following procedure, denoted below by $\kappa(n,D,f)$, determines triples satisfying the condition of the lemma.

We define the algorithm $\kappa(n,D,f)$ as follows. 
For an input $f$ let $F=supp f$ and $P_0 := \{\{x\}:x\in F\}$, i.e. the finest partition of $F$. 
Let us fix an enumeration of the set $\{(n_1,n_2): \nu(n_1)=\nu(n_2)\}$.
Then on the $m$-th step of this enumeration we are trying to merge elements of the partition $P_{m-1}$ obtained at step $m-1$. 
We do so when we meet $(n_1,n_2)$, such that 
$|V_i\cap \{n_1,n_2\}|=|V_j \cap \{n_1,n_2\}|=1$  for some pair $V_i, V_j \in P_{m-1}$. 
In this case we just merge this pair. 
We see that $P_m \leq P$. 
Then we verify if $M_{P_{m}}^{x}(f) \leq \frac{1}{n}$ for all $x\in D$. 
We stop when these inequalities hold or when  $P_m = P$. 
In the former case  by (\ref{5}) and (\ref{6}) the function $\nu_{G *}(f)$ is $n$-invariant.
If there exist $x$, such that $M_{P_{m}}^{x}(f) > \frac{1}{n}$ and $P_m = P$, 
 then the function $\nu_{G *}(f)$ is not $n$-invariant. 

\end{proof}

The following theorem is a part of Therem \ref{1} from the introduction.

\begin{thm}\label{re}
Let $(G,\nu)$ be a computably enumerable group. 
Then the following conditions are equivalent:

\begin{enumerate}[(i)]
\item $(G,\nu)$ is amenable; 

\item $(G,\nu)$ has a subrecursive F\o lner function;

\item $(G,\nu)$ is $\Sigma$-amenable;

\item $(G,\nu)$ has  computable Reiter functions.

\end{enumerate}
\end{thm}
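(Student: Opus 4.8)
The plan is to establish the cycle of implications $(i) \Rightarrow (iv) \Rightarrow (iii) \Rightarrow (ii) \Rightarrow (i)$, following Cavaleri's scheme but leaning on Lemma \ref{rr} and on the deliberately weak formulation of $\Sigma$-amenability in order to sidestep the fact that in a merely computably enumerable group $\nu$-equality is only semidecidable. The implication $(ii) \Rightarrow (i)$ I would dispatch immediately: a subrecursive F\o lner function is bounded by a total computable function, hence finite on every input $(n,D)$, which is exactly the F\o lner condition of amenability recalled in the preliminaries.

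For $(i) \Rightarrow (iv)$ I would invoke Lemma \ref{rr} directly. It supplies a computable enumeration of all triples $(n,D,f)$ with $\nu_{G *}(f) \in \mathfrak{R}eit_{G,\nu(D)}(n)$. If $G$ is amenable, then for any query $(n,D)$ a F\o lner set $F_0 \in \mathfrak{F} \o l_{G,\nu(D)}(2n)$ exists; lifting it to a finite $F_0' \subset \mathbb{N}$ on which $\nu$ is injective and setting $f := \chi_{F_0'}$ gives $\nu_{G *}(f) = \chi_{F_0}$, which is $n$-invariant by Lemma \ref{fr}(iii). Hence at least one triple with prefix $(n,D)$ occurs in the enumeration, so searching the enumeration until such a triple appears and returning its $f$ is a total algorithm producing Reiter functions.

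The delicate step is $(iv) \Rightarrow (iii)$, and this is where the computably enumerable (rather than computable) setting bites: I cannot compute the canonical partition of the support, hence cannot locate the level sets of the pushforward $\nu_{G *}(f)$ on which the level-set argument behind Lemma \ref{fr}(iv) relies. The escape is that $\Sigma$-amenability does not require me to exhibit a F\o lner set, only to output a finite $F \subset \mathbb{N}$ that \emph{contains} one in its image. Thus for a query $(n,D)$ I would call the algorithm of $(iv)$ with invariance parameter $m := \lceil n|D|/2 \rceil$, obtaining finitely supported $f$ with $h := \nu_{G *}(f) \in \mathfrak{R}eit_{G,\nu(D)}(m)$, and output $F := Supp(f)$, which is computable from $f$. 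By Lemma \ref{fr}(iv) there is $F_* \subset Supp(h) = \nu(F)$ with $|F_* \setminus xF_*|/|F_*| < |\nu(D)|/(2m) \leq |D|/(2m) \leq 1/n$ for all $x \in \nu(D)$, so $F_* \in \mathfrak{F} \o l_{G,\nu(D)}(n)$. Choosing one $\nu$-preimage in $F$ for each element of $F_*$ produces the subset $F' \subset F$ with $\nu(F') = F_*$ demanded by Definition \ref{cea}; crucially, only the existence of $F'$, not its computation, is needed.

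Finally $(iii) \Rightarrow (ii)$ is a counting estimate: the $\Sigma$-amenability algorithm outputs $F(n,D)$ whose image contains an $n$-F\o lner set $\nu(F')$, so $F\o l_{G}(n,\nu(D)) \leq |\nu(F')| \leq |F(n,D)|$, and $(n,D) \mapsto |F(n,D)|$ is computable; this is the desired total computable upper bound on the F\o lner function. I expect the only genuine obstacle to be the partition/level-set issue in $(iv) \Rightarrow (iii)$ just described, which is resolved precisely by the weak ``contains a good subset'' clause of $\Sigma$-amenability; the other three implications are bookkeeping or direct applications of Lemmas \ref{rr} and \ref{fr}.
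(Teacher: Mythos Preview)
Your proposal is correct and follows essentially the same route as the paper: the same cycle $(i)\Rightarrow(iv)\Rightarrow(iii)\Rightarrow(ii)\Rightarrow(i)$, the same use of Lemma~\ref{rr} to turn amenability into a search that halts, and the same appeal to Lemma~\ref{fr}(iv) together with the ``contains a good subset'' clause of $\Sigma$-amenability to pass from Reiter functions to (iii) without computing level sets. You are in fact more explicit than the paper in choosing the invariance parameter $m=\lceil n|D|/2\rceil$ in $(iv)\Rightarrow(iii)$ and in spelling out the computable upper bound in $(iii)\Rightarrow(ii)$, but the underlying arguments are identical.
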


\begin{proof}
It is clear that 
(iii)$\implies$(ii)$\implies$(i). 
%Algorithm from point $(iii)$ is recursive function giving number of F\o lner set. 
%We know, that for G\"odel encoding number of set is bigger than it's cardinality. 
%Thus for all $n$ and every $D$ algorithm is recursive upper bound of  F\o lner function.

(iv)$\implies$(iii). 
By Definition \ref{cr} for all $n\in \mathbb{N}$ and every $D \subset\subset\mathbb{N}$ we find a function 
$f: \mathbb{N} \rightarrow \mathbb{Q^{+}}, |supp(f)| < \infty$, such that 
$\nu_{G *}(f)\in \mathfrak{R}eit_{G, D}$. 
Denote $F := supp(f)$. By Lemma \ref{fr} $(iv)$, there exists $\epsilon\in \mathbb{R}^{+}$ 
such that $\{g\in G: \nu_{G *}(f)(g) > \epsilon\}$ contains a subset that belongs to $\mathfrak{F}\o l_{G, \nu(D)}(n)$.
Since $\{g\in G: \nu_{G *}(f)(g) > \epsilon\} \subset \nu(F)$, then there exists 
$F'\subseteq F$ such that $\nu(F')$ satisfies the F\o lner condition. 

To prove (i)$\implies$(iv) let us assume that the group $G$ is amenable. 
Therefore for any $n$ and $D$ there exists $F\subset\subset\mathbb{N}$ such that 
$\nu(F) \in \mathfrak{F}\o l_{G, \nu(D)}(2n)$ and $|F| = |\nu(F)|$. 
Since $\nu$ is injective on $F$, $\nu_{G *}(\chi_F)=\chi_{\nu(F)}\in \mathfrak{R}eit_{G, D}(n)$. 
We fix an enumeration of finite subsets of $\mathbb{N}: F_1,F_2,\ldots$ and we start the algorithms 
$\kappa(n,D,\chi_{F_1}),\kappa(n,D,\chi_{F_2}),\ldots$ constructed in Lemma \ref{rr}, 
until one of them stops giving us a Reiter function for $\nu(D)$.

\end{proof}

\section{Effective amenability of computable groups}

The main results of this section, correspond to Theorem 4.1 and Corollary 4.2 of M. Cavaleri  from \cite{MC3}. 
In the proof we will use functions $\star$ and $^{*}$ from Remark \ref{rp}. 

\begin{thm}\label{ce}
Let $(G,\nu)$ be a computably enumerable group. The following conditions are equivalent:

\begin{enumerate}[(i)]
\item $(G,\nu)$ is amenable and computable; 

\item $(G,\nu)$ is computably amenable (Definition \ref{ca}).

\end{enumerate}
\end{thm}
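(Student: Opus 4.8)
The plan is to prove the two implications separately, exploiting the fact that computability of $(G,\nu)$ gives us a decidable $\mathsf{MultT}$ and hence a decidable equality relation $\{(n_1,n_2):\nu(n_1)=\nu(n_2)\}$, together with the already-proven characterizations from Theorem \ref{re}.

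For the direction (i)$\implies$(ii), suppose $(G,\nu)$ is amenable and computable. By Theorem \ref{re} amenability gives $\Sigma$-amenability, so there is an algorithm which, on input $(n,D)$, produces a finite $F\subset\subset\mathbb{N}$ containing a subset $F'$ with $\nu(F')\in\mathfrak{F}\text{\o}l_{G,\nu(D)}(n)$. The defect of this $F$ relative to Definition \ref{ca} is that $\nu$ need not be injective on it and we are only guaranteed a \emph{sub}set $F'$ that is F\o lner, not $F$ itself. Here computability is exactly what repairs both defects: since the equality relation is decidable, I can run through the elements of $F$ and discard duplicates, keeping for each class a single representative number, thereby producing a set $F''\subseteq F$ with $|F''|=|\nu(F'')|$ and $\nu(F'')=\nu(F)$. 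It then remains to \emph{search} within $F''$ for a subset which is genuinely $n$-F\o lner: because equality and hence the map $k\mapsto\nu(k)$ are decidable, for any finite candidate $E\subseteq F''$ and any $x\in D$ the quantities $|E|$ and $|E\setminus\nu(x)E|$ are computable, so the F\o lner inequality is decidable; I simply test all subsets of $F''$ until I find one satisfying it, which must exist because $\nu(F')$ works and $F'$ corresponds to such a subset after the deduplication. The output set has the required property $|F|=|\nu(F)|$.

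For the converse (ii)$\implies$(i), assume $(G,\nu)$ is computably amenable. Amenability is immediate, since computable amenability in particular produces, for every $(n,D)$, a genuine $n$-F\o lner set with respect to $\nu(D)$, which verifies the F\o lner condition and hence (via Theorem \ref{re}, or directly) amenability. The substantive point is computability, i.e. decidability of $\mathsf{MultT}$. The idea is that the injectivity clause $|F|=|\nu(F)|$ in Definition \ref{ca} forces the algorithm to certify, for the elements it outputs, that they are pairwise $\nu$-distinct; this gives positive information about the inequality relation, while the underlying computable enumerability of $\mathsf{MultT}$ gives positive information about equality. Combining a procedure that confirms $\nu(i)=\nu(j)$ (enumeration) with one that confirms $\nu(i)\neq\nu(j)$ (extracted from the F\o lner sets returned with $|F|=|\nu(F)|$) decides the equality relation, and by the Remark following Definition \ref{df1} this yields a computable presentation, so $G$ is computable.

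The main obstacle I anticipate is the second direction: turning the combinatorial injectivity guarantee $|F|=|\nu(F)|$ into an actual decision procedure for $\nu$-equality on \emph{all} of $\mathbb{N}$. One must argue that for any prescribed pair $(i,j)$ one can engineer inputs $(n,D)$ to the computable-amenability algorithm whose returned F\o lner sets are forced to separate $\nu(i)$ from $\nu(j)$ whenever they are distinct, so that running the enumeration of $\mathsf{MultT}$ in parallel with this separation procedure halts on exactly one side. Making this forcing uniform and correct — ensuring the F\o lner sets grow large enough, or choosing $D$ suitably so that the distinctness of the two target elements is exposed — is the delicate step, and it is where the equivalence genuinely uses the strength of the injectivity requirement rather than merely the existence of F\o lner sets.
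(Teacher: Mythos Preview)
Your (i)$\Rightarrow$(ii) argument is correct; it is slightly more roundabout than the paper's, which simply enumerates all finite $F\subset\subset\mathbb{N}$ and tests each one directly (both the F\o lner inequality and the injectivity condition $|F|=|\nu(F)|$ are decidable when $(G,\nu)$ is computable), but the detour through $\Sigma$-amenability does no harm.

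The gap is in (ii)$\Rightarrow$(i). You have correctly identified that the content lies in proving computability, and that the injectivity clause $|F|=|\nu(F)|$ must somehow supply the \emph{inequality} side of a decision procedure. But the mechanism you sketch --- engineering $(n,D)$ so that the returned $F$ is ``forced to separate $\nu(i)$ from $\nu(j)$'' --- does not go through as stated: nothing forces the algorithm to return an $F$ containing the particular indices $i$ and $j$, so membership in $F$ cannot be used to certify $\nu(i)\neq\nu(j)$. You flag this as ``the delicate step'' and leave it open; as written the argument is incomplete precisely at the point where the real work is.

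The paper's idea is different and worth knowing. To decide whether $\nu(n_1)\nu(n_2)=\nu(n_3)$, set $D=\{n_1,n_2,n_3\}$ and request a $4$-F\o lner set $F=\{f_1,\ldots,f_k\}$ with $|F|=|\nu(F)|$. One does not look for the $n_l$ inside $F$; instead one uses $\nu(F)$ as an arena on which $\nu(n_1),\nu(n_2),\nu(n_3)$ act by left multiplication. By the F\o lner condition each of these maps sends more than $\tfrac{3}{4}k$ elements of $\nu(F)$ back into $\nu(F)$, and because $\nu$ is injective on $F$ these partial maps lift uniquely to partial permutations $\Sigma_1,\Sigma_2,\Sigma_3$ of the index set $[k]$. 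Enumerating $\mathsf{MultT}$, one waits until each $\Sigma_l$ has been revealed on more than $\tfrac{3}{4}k$ indices; this is guaranteed to happen. One then asks whether some index witnesses compositional agreement of $\Sigma_1,\Sigma_2$ with $\Sigma_3$: if $\nu(n_1)\nu(n_2)=\nu(n_3)$ such indices fill at least $\tfrac{1}{4}$ of $[k]$, while if the equality fails there are none. Thus the role of $|F|=|\nu(F)|$ is not to certify inequality of prescribed numbers, but to make the group action on $\nu(F)$ faithfully representable on the index set $F$ itself.
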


\begin{proof}
(i)$\implies$(ii).
Suppose that $(G,\nu)$ is amenable and computable. 
Let $D\subset\subset \mathbb{N}$. According the enumeration of all finite sets for every $F\subset\subset \mathbb{N}$ 
we verify if the conditions of (ii) are satisfied. 
Verifying all equalities of the form $\nu(f_i)\nu(d_k)=\nu(f_j)$, where $f_i,f_j\in F$ and $d_k\in D$, 
we can algorithmically check if $\nu(F)\in \mathfrak{F} \o l_{G, \nu(D)}(n)$.
Verifying all equalities of the form $\nu(f_k)=\nu(f_l)$, where $f_k, f_l \in F$, we can check if $|F|=|\nu(F)|$. 
Since $(G,\nu)$ is amenable we eventually find the required $F$.

(ii)$\implies$(i).
Our proof is a modification of the construction of Theorem 4.1 from \cite{MC3}.
It is clear that the existence of an algorithm for (ii) implies amenability of $(G,\nu)$. 
Therefore we only need to show that $(G,\nu)$ is computable.
It is sufficent to show that for any $n_1,n_2,n_3 \in \mathbb{N}$ we can check if $\nu(n_1)\nu(n_2)=\nu(n_3)$.

Fix $n_1,n_2,n_3$. 
Let $D$ be the set $\{n_{1},n_{2},n_{3}\}$.
%We begin by finding $n_4 = \Phi(\Phi(n_{1},n_{2}),n_{3}^{*})$, where $\Phi$ computes multiplication (see Remark \ref{rp}).
%Since $\nu(n_1)\nu(n_2)=\nu(n_3)$ is equivalent to $\nu(n_4)=1$, we only need to check the latter equality. 
%We will use the fact that the word $\nu(n_4)$ has length of at most $3$ in the generators $\nu(D)$.
%%%%%%%%%%%%%%%%%%%%%
We use the algorithm for (ii) to find a set $F$ corresponding to $4$ and $D$, 
i.e. $\nu(F) \in F \o l_{G, \nu(D)}(4)$ and $|F|=|\nu(F)|$. Let $F = \{f_1, f_2,\ldots, f_k\}$. 
%Let $s$ be a number corresponding to $1$ of the group $G$ (see Remark \ref{rp}).
%%%%%%%%%%%%%%%%%%%%%%%

We fix an enumeration of the set of triples $\mathsf{MultT}$. 
Using it we will enumerate the (directed) graph of the action by multiplication of 
$\nu(n_1),\nu(n_2)$ and $\nu(n_3)$  on the set $\nu(F)$. 
We start by setting $\Sigma_{1}^{0} = \Sigma_{2}^{0} = \Sigma_{3}^{0} = \emptyset$.
At the m-th step of the construction we verify if the $m$-th triple of $\mathsf{MultT}$ 
is a triple of the form $n_l\star f_i = f_j$ for $l=1,2,3$.  
In this case we extend the corresponding 
%%%%%
%set $\Sigma_{l}'=\{(i,j) \in [k]^2:^{*})=s\}$ (i.e. set of $(i,j)$ such that $\nu(n_l\star f_i\star f_j^{*})=1$), for $l=1,2,3$. 
$\Sigma_{l}^{m-1}$ by the pair  $(i,j)$.  
The graphs after step $m$ are denoted by $\Sigma_{l}^{m}$,  $l=1,2,3$.  
%such that $\nu(n_l\star f_i\star f_j^{*})=1$
%%%%%%%%%and $\Sigma_{l}'$. 

Next we verify $\min\limits_l |\Sigma_{l}^{m}|>\frac{3k}{4}$. 
If the inequality holds we stop the construction with $\Sigma_{l}:=\Sigma_{l}^{m}$.
\newline 
Since
 $$
\frac{|\{(i,j): \nu(n_l\star f_i\star f_j^{*})=1\}|}{k}\geq \frac{|\nu(F)\cap\nu(n_l)\nu(F)|}{|\nu(F)|}>\frac{3}{4}
$$
the procedure stops at some step $m$.

Let 
$$
\Sigma = \{i\in [k]: \exists j_1,j_2\in [k], (i,j_1)\in\Sigma_1, (j_1,j_2)\in\Sigma_2, (i,j_2)\in\Sigma_3\}. 
$$
If $\nu(n_1)\nu(n_2)=\nu(n_3)$ then for all $i\in [k]$, $\nu(n_1)\nu(n_2)\nu(f_i)=\nu(n_3)\nu(f_i)$. 
Since each of the partial permutations $\Sigma_1, \Sigma_2,\Sigma_3$ can be undefined for at most 
$\frac{1}{4}|F|$ elements from $|F|$, then $\nu(n_1)\nu(n_2)=\nu(n_3)$ implies $|\Sigma|\geq\frac{1}{4}|F|$.  
If equality does not hold, then $\Sigma$ is an empty set. 
When we see which possibility holds we decide if   $\nu(n_1)\nu(n_2)=\nu(n_3)$. 

\end{proof}

%In combination with Theorem \ref{re}, Theorem \ref{ce}  proves Theorem \ref{1} from  Introduction.
The proof of Theorem \ref{ce} gives the following interesting observation. 

\begin{cor} 
Let $(G,\nu)$ be a computably enumerable, amenable group. 
If for some $n\geq 4$ there exists an algorithm, which for every $D\subset\subset\mathbb{N}$ finds a set $F\subset\subset\mathbb{N}$ such that $\nu(F) \in F \o l_{G, \nu(D)}(n)$ and $|F|=|\nu(F)|$, then $G$ is computable.
\end{cor}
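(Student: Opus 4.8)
The plan is to replay the argument for the implication (ii)$\implies$(i) of Theorem \ref{ce} verbatim, with the fixed number $n$ taking the role previously played by the constant $4$. As in that proof, amenability is automatic, so it suffices to show $G$ is computable, and for this it is enough to decide, for arbitrary $n_1,n_2,n_3\in\mathbb{N}$, whether $\nu(n_1)\nu(n_2)=\nu(n_3)$. I would therefore fix $n_1,n_2,n_3$ and put $D=\{n_1,n_2,n_3\}$.

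First I would feed $D$ to the hypothesized algorithm to obtain a finite $F=\{f_1,\dots,f_k\}\subset\mathbb{N}$ with $\nu(F)\in\mathfrak{F}\o l_{G,\nu(D)}(n)$ and $|F|=|\nu(F)|$, so that $\nu$ is injective on $F$. Then, enumerating $\mathsf{MultT}$, I would construct exactly as before the partial permutations $\Sigma_1,\Sigma_2,\Sigma_3$ of $[k]$, adding $(i,j)$ to $\Sigma_l$ as soon as the enumeration reveals $n_l\star f_i=f_j$ (with $\star$ from Remark \ref{rp}). By Lemma \ref{fr}(ii) the $n$-F\o lner condition yields, for each $l$,
$$
\frac{|\nu(F)\cap\nu(n_l)\nu(F)|}{|\nu(F)|}>1-\frac{1}{n},
$$
so each $\Sigma_l$ is ultimately defined on more than a $\left(1-\tfrac1n\right)$-fraction of $[k]$. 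Hence I may halt the enumeration once $\min_l|\Sigma_l^m|>\left(1-\tfrac1n\right)k$, and this occurs at a finite stage.

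The only place where the hypothesis $n\ge 4$ is used is the concluding count. Setting, as in Theorem \ref{ce},
$$
\Sigma=\{i\in[k]:\ \exists\, j_1,j_2\in[k],\ (i,j_1)\in\Sigma_1,\ (j_1,j_2)\in\Sigma_2,\ (i,j_2)\in\Sigma_3\},
$$
each of the three partial permutations is undefined on at most $\tfrac1n k$ indices (for the middle one this is pulled back along the injection $\Sigma_1$), so $\Sigma$ omits at most $\tfrac3n k$ indices. If $\nu(n_1)\nu(n_2)=\nu(n_3)$ the three graphs agree on every defined index, whence $|\Sigma|\ge\left(1-\tfrac3n\right)k$, which is strictly positive precisely because $n\ge 4$ gives $1-\tfrac3n>0$; and if the equality fails, cancelling $\nu(f_i)$ shows $\Sigma=\emptyset$. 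Observing which alternative holds decides the identity, so $G$ is computable.

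I expect the main obstacle to be merely the bookkeeping confirmation that replacing $4$ by an arbitrary $n\ge 4$ damages neither the termination of the $\mathsf{MultT}$-enumeration nor the dichotomy between $|\Sigma|\ge\left(1-\tfrac3n\right)k$ and $\Sigma=\emptyset$; both reduce to the single inequality $1-\tfrac3n>0$, which is also what pins the threshold $4$ as the smallest value for which this method works. No idea beyond the proof of Theorem \ref{ce} is required.
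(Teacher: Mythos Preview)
Your proposal is correct and is exactly what the paper intends: the corollary is stated immediately after Theorem \ref{ce} with the remark that its proof ``gives the following interesting observation'', and your write-up simply makes explicit the substitution of $n\ge 4$ for the constant $4$ together with the resulting thresholds $(1-\tfrac1n)k$ and $(1-\tfrac3n)k$. No new idea is needed, and your identification of the inequality $1-\tfrac3n>0$ as the sole place where $n\ge 4$ enters is on the mark.
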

\noindent Using Theorem \ref{ce} we deduce a version of Theorem \ref{re} 
for computable groups. 
This finishes the proof of Theorem \ref{1}. 

\begin{thm}\label{eq}
Let $(G,\nu)$ be a computable group. Then the following conditions are equivalent:

\begin{enumerate}[(i)]
\item $(G,\nu)$ is amenable; 

\item $(G,\nu)$ is computably amenable;

\item $(G,\nu)$ has computable F\o lner sets;

\item $(G,\nu)$ has computable Reiter functions;

\item $(G,\nu)$ has subrecursive F\o lner function. 

\end{enumerate}
\end{thm}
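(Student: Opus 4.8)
The plan is to derive all five equivalences for a computable group $(G,\nu)$ by combining Theorem~\ref{re} (the computably enumerable case) with Theorem~\ref{ce}, together with a few easy direct implications. Since a computable group is in particular computably enumerable, Theorem~\ref{re} already gives us that amenability, subrecursiveness of the F\o lner function, $\Sigma$-amenability, and computable Reiter functions are all equivalent. So the only genuinely new work is to splice condition (iii), computable F\o lner sets, and condition (ii), computable amenability, into this chain using the extra hypothesis that $\nu$ is a constructivization.

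First I would observe that the implications $\text{(ii)}\Rightarrow\text{(iii)}\Rightarrow\text{(i)}$ are immediate. Indeed, computable amenability asks for a finite $F\subset\subset\mathbb{N}$ with $\nu(F)\in\mathfrak{F}\text{\o}l_{G,\nu(D)}(n)$ satisfying the extra injectivity condition $|F|=|\nu(F)|$, so it trivially yields computable F\o lner sets (just drop the injectivity requirement), and the existence of computable F\o lner sets in turn produces an $n$-F\o lner set for every $(n,D)$, which is exactly the F\o lner condition of amenability. To close the cycle I would then prove $\text{(i)}\Rightarrow\text{(ii)}$, and here is where the computability of $(G,\nu)$ is used in an essential way: this is precisely the content of the implication $\text{(i)}\Rightarrow\text{(ii)}$ of Theorem~\ref{ce}, whose proof searches through the enumeration of all finite sets $F\subset\subset\mathbb{N}$ and, using decidability of $\mathsf{MultT}$, algorithmically checks both the F\o lner condition $\nu(F)\in\mathfrak{F}\text{\o}l_{G,\nu(D)}(n)$ and the injectivity condition $|F|=|\nu(F)|$; amenability guarantees the search terminates. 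Note that decidability is what makes these checks effective, in contrast to the merely computably enumerable setting of Theorem~\ref{re}.

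Having established $\text{(i)}\Leftrightarrow\text{(ii)}\Leftrightarrow\text{(iii)}$, I would knit these into the remaining conditions by invoking Theorem~\ref{re}: its equivalence $\text{(i)}\Leftrightarrow\text{(ii)}\Leftrightarrow\text{(iv)}$ (in that theorem's numbering, amenability, subrecursive F\o lner function, and computable Reiter functions) applies verbatim since $(G,\nu)$ is computably enumerable, identifying the present conditions (i), (v), and (iv) as mutually equivalent. Chaining this with the block just proved leaves every one of the five conditions equivalent to amenability, which completes the argument. The only mild bookkeeping is matching up the condition labels between the two theorems, since $\Sigma$-amenability from Theorem~\ref{re} does not appear explicitly in the present list but is subsumed by the equivalence with amenability.

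The main obstacle, such as it is, lies entirely in the $\text{(i)}\Rightarrow\text{(ii)}$ direction, but this is already discharged by Theorem~\ref{ce}; given that result, the present theorem is essentially a repackaging. If one were proving Theorem~\ref{ce} from scratch the delicate point would be the reverse implication $\text{(ii)}\Rightarrow\text{(i)}$ establishing computability from computable amenability, via the partial-permutation graph argument that recovers $\mathsf{MultT}$ from F\o lner sets with the $4$-invariance threshold; but for the statement at hand we may assume Theorem~\ref{ce}, so no new difficulty arises and the proof reduces to assembling the cited equivalences.
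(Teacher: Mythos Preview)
Your proof is correct and follows essentially the same approach as the paper: both rely on Theorem~\ref{ce} for the key implication $\text{(i)}\Rightarrow\text{(ii)}$, after which the remaining equivalences are routine. The only organizational difference is that you invoke Theorem~\ref{re} to obtain the block $\text{(i)}\Leftrightarrow\text{(iv)}\Leftrightarrow\text{(v)}$, whereas the paper instead gives short direct arguments for $\text{(ii)}\Rightarrow\text{(iv)}$ (via $\chi_F$ and Lemma~\ref{fr}(iii)) and $\text{(iv)}\Rightarrow\text{(iii)}$ (via Lemma~\ref{fr}(iv)), never citing Theorem~\ref{re} explicitly.
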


\begin{proof}
By Theorem \ref{ce} we have (i)$\Rightarrow$(ii) and by Lemma \ref{fr}(iv) we have (iv)$\Rightarrow$(iii). Both 
(ii)$\Rightarrow$(iii)$\Rightarrow$(i) and (ii)$\Rightarrow$(v)$\Rightarrow$(i) are easy to see.

It follows that we only need to show that (ii)$\Rightarrow$(iv). 
We start with a finite set $D$ and use an algorithm of (ii) to find a set $F$ corresponding to $2n$. 
Then the characteristic function $\chi_F$ can be taken as $f$ from Definition \ref{cr}. 
Indeed since the function $\nu$ is injective on $F$ then $\nu_{G *}(\chi _{F})$ is the characteristic function of $\nu(F)$, 
which is $n$-invariant by Lemma \ref{fr}(iii).
\end{proof}

\section{Effective F\o lner sequence}

Let $(G,\nu)$ be a computable group. 
Since in the case of computable groups we can assume that function $\nu$ is $1$-$1$, we identify the set $G$ with $\mathbb{N}$ and subsets $F$ of $N$ with $\nu(F)\subset G$.

The \textbf{effective F\o lner sequence} of the group $(G,\nu)$, is an effective sequence $(n_j)_{j\in\mathbb{N}}$ such that for each $j$, $n_j$ is a G\"odel number of the set $F_j$, with $(F_j)_{j\in\mathbb{N}}$ being a F\o lner sequence. 

In the previous section we have shown that amenability of $(G,\nu)$ is equivalent to computable amenability. 
Note that this is also equivalent to existence of effective F\o lner sequences. 
Indeed, given $j$ we use the algorithm for computable amenability and compute the G\"odel number 
$n_j$ of some $F_j \in \mathfrak{F} \o l_{G, [j]}(j)$. 
Clearly, the sequence $(F_j)_{j\in\mathbb{N}}$ is a F\o lner sequence and a sequence 
$(n_j)_{j\in\mathbb{N}}$ is an effective F\o lner sequence. 

The following Theorem classifies the set of all effective F\o lner sequences of the group $(G,\nu)$ 
in the Arithmetical Hierarchy.
The idea of it belongs to Aleksander Ivanov. 

\begin{thm}\label{fs} 
Let $(G,\nu)$ be a computable group. 
The set of all effective F\o lner sequences of $(G,\nu)$ belongs to the class $\Pi^0_3$. 
Moreover, for $G=\bigoplus\limits_{n\in\omega} \mathbb{Z}$ it is a $\Pi^0_3$-complete set.
\end{thm}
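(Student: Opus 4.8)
The plan is to prove the two halves separately, and throughout I regard an effective F\o lner sequence as an index $e$ of a total computable function, so that the object to be classified is
$$\mathcal{S}=\{e:\ \phi_e\ \text{is total, and}\ (F_{\phi_e(j)})_{j\in\mathbb{N}}\ \text{is a F\o lner sequence}\},$$
where $F_m\subset G$ is the finite set coded by the G\"odel number $m$ (using the identification of $G$ with $\mathbb{N}$ from Section~5). For the upper bound, totality of $\phi_e$ is the $\Pi^0_2$ predicate $\forall j\,\exists s\,H(e,j,s)$, with $H$ the decidable Kleene predicate ``$\phi_e(j)$ halts in $s$ steps''. Because $G$ is computable, once a value $\phi_e(j)=m$ is available the F\o lner ratio $|F_m\setminus \nu(x)F_m|/|F_m|$ is computable from $m$ and $x$, and the F\o lner condition reads $\forall x\,\forall k\,\exists N\,\forall j\ge N\,(\mathrm{ratio}(\phi_e(j),x)<1/k)$. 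The only delicate point is that this inner relation presupposes convergence of $\phi_e(j)$: writing it as $\exists s\,[H(e,j,s)\wedge\dots]$ costs an extra existential quantifier and yields only $\Pi^0_4$. I avoid this by exploiting uniqueness of the halting stage, rewriting the matrix as $\forall s\,[\,H(e,j,s)\to \mathrm{ratio}(U(e,j,s),x)<1/k\,]$ (with $U$ the output function), which is $\Pi^0_1$ and, on total indices, equivalent to the original relation. Then $\exists N\,\forall j\,\forall s\,[\dots]$ is $\Sigma^0_2$ and prefixing $\forall x\,\forall k$ gives a $\Pi^0_3$ predicate $\Phi(e)$ agreeing with the F\o lner condition on all total $e$. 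Hence $\mathcal{S}$ is the intersection of a $\Pi^0_2$ set with $\{e:\Phi(e)\}$, so $\mathcal{S}\in\Pi^0_3$.

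For hardness when $G=\bigoplus_{n}\mathbb{Z}$, I would build a many-one reduction from an arbitrary $\Pi^0_3$ set $B=\{x:\forall n\,\exists N\,\forall s\,R(x,n,N,s)\}$ ($R$ computable) to $\mathcal{S}$. Let $u_n$ be the $n$-th standard generator. I define $F_j=F_j^{(x)}$ to be the axis-parallel box $\{a\in\bigoplus_n\mathbb{Z}:0\le a_n<\ell_n^{(j)}\ \forall n\}$, where the side lengths $\ell_n^{(j)}\ge 1$ are computable from $x,n,j$ and satisfy $\ell_n^{(j)}=1$ for $n>j$, so each $F_j$ is a non-empty finite set with a computable G\"odel number (the presentation of $\bigoplus_n\mathbb{Z}$ by finitely supported integer sequences is plainly computable). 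The reduction sends $x$ to an index $\psi(x)$, produced by the $s$-$m$-$n$ theorem, for the always-total computable map $j\mapsto$ ``G\"odel number of $F_j^{(x)}$''.

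The key geometric fact is that for such a box $|F\setminus u_nF|/|F|=1/\ell_n$, so by left-invariance of counting measure and the estimate $|F\setminus gF|\le\sum_t|F\setminus u_{i_t}^{\pm1}F|$ for $g=u_{i_1}^{\pm1}\cdots u_{i_\ell}^{\pm1}$, the sequence $(F_j)$ is F\o lner if and only if $\ell_n^{(j)}\to\infty$ as $j\to\infty$ for every $n$. It thus suffices to engineer, for each fixed $n$, a computable side length with $\ell_n^{(j)}\to\infty$ iff $\sigma_n(x):=\exists N\,\forall s\,R(x,n,N,s)$. For this I would use a run-length construction: at stage $j$ let $m(j)$ be the least $N\le j$ with $R(x,n,N,s)$ for all $s\le j$ (undefined if none), and set $\ell_n^{(j)}=1+\rho(j)$, where $\rho(j)$ counts the number of consecutive stages ending at $j$ on which $m$ has stayed defined and constant. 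If $\sigma_n(x)$ holds, $m$ stabilises at the least true witness and $\rho(j)\to\infty$; if $\sigma_n(x)$ fails, $m$ is non-decreasing and tends to infinity, hence changes infinitely often, so $\rho(j)=0$ infinitely often and $\ell_n^{(j)}\not\to\infty$. Therefore $(F_j^{(x)})$ is F\o lner iff $\forall n\,\sigma_n(x)$ iff $x\in B$, and $\psi$ reduces $B$ to $\mathcal{S}$; since $B$ was an arbitrary $\Pi^0_3$ set, $\mathcal{S}$ is $\Pi^0_3$-hard, and with the first part $\Pi^0_3$-complete.

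The main obstacle is keeping both estimates tight. On the upper bound, the naive encoding of the ratios genuinely lands in $\Pi^0_4$, and one must use uniqueness of the halting stage to shave off the stray existential quantifier. On the lower bound, the tempting choice $\ell_n^{(j)}=j-m(j)$ fails: if candidates die slowly this quantity can diverge even when $\sigma_n(x)$ is false, so the delicate part is to measure the length of the current stable run rather than the position of the current candidate. This is exactly what makes ``convergence of side lengths'' capture the $\Sigma^0_2$ block $\exists N\,\forall s$ while keeping $j\mapsto F_j^{(x)}$ total computable for every $x$, which is what the reduction requires.
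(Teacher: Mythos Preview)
Your argument is correct. The upper-bound half is essentially the paper's, though you are more explicit about the quantifier bookkeeping (the paper simply writes out formula~(\ref{seq}) and asserts it is $\Pi^0_3$; your implication trick with the halting predicate makes this honest). The lower-bound half, however, takes a genuinely different route. The paper reduces from the specific $\Pi^0_3$-complete index set $\overline{Cof}=\{e:\forall n\ W_{\varphi_e(n)}\text{ is finite}\}$: enumerating the pairs $(n,x)$ with $x\in W_{\varphi_e(n)}$, at stage $s$ it builds the box $\prod_{i\le s}\{g_i,\dots,g_i^s\}$ but collapses the single coordinate $n_s$ to $\{g_{n_s}\}$, so coordinate $n$ is spoiled infinitely often exactly when $W_{\varphi_e(n)}$ is infinite. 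You instead reduce an \emph{arbitrary} $\Pi^0_3$ predicate directly, synthesising each inner $\Sigma^0_2$ block by a stabilisation/run-length device on the side lengths. The paper's reduction is shorter and requires no delicate analysis---the ``spoiling'' idea is one line---but it leans on an external citation for the completeness of $\overline{Cof}$. Your construction is self-contained and exposes why the box geometry matches the quantifier structure (side length divergence is exactly a $\Sigma^0_2$ event), at the price of the care you already noted: the run-length counter must reset on every change of the least surviving candidate, not merely track its position.
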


\begin{proof} 
Let $\varphi(x,y)$ be a universal recursive function, and $\varphi_x(y)=\varphi(x,y)$ be a recursive function with a number $x$. 
We identify effective F\o lner sequences with numbers of recursive functions which produce these sequences.
The set of these numbers is denoted by $\mathfrak{F}_{seq}(G)$.
Then $m$ is a number of an effective F\o lner sequence if and only if the following formula holds:
\begin{align}
(\phi(m,y) &\text{ is a total function})\wedge\notag
(\forall g\in G)(\forall n)(\exists l)(\forall k)\Big(k>l\wedge (\phi(m,k)=f)\\ 
&\wedge(\mbox{$f$ is a G\" odel number of $F_j$})\rightarrow \frac{|F_j\setminus gF_j|}{|F_j|}<\frac{1}{n}\Big).\label{seq}
\end{align}

Given number $f$ the inequality $\frac{|F_j\setminus gF_j|}{|F_j|}<\frac{1}{n}$ can be verified effectively. 
Since the set of numbers of all total functions belongs to the class $\Sigma_2^0$ it is easy to see that the set of all $m$ which satisfy (\ref{seq}) is a $\Pi_3^0$ set. This proves the first part of the theorem.

We remind the reader that $W_x=Dom\varphi_x$ is the computably enumerable set with a number $x$.
The set $\overline{Cof}=\{e:\forall n\; W_{\varphi_e(n)} \text{ is finite}\}$, is known to be a $\Pi^0_3$-complete set (\cite{sri}, p. 87). 
To prove the second part of the theorem, assume that $G=\bigoplus\limits_{n\in\omega} \mathbb{Z}$. Let us show that the set $\overline{Cof}$ is reducible to $\mathfrak{F}_{seq}(G)$.
For each $e$ let us fix a computable enumeration of the set $\{(n,x): x\in W_{\varphi_e(n)}\}$.
We can assume that this enumeration is without repetitions.

We present $\bigoplus\limits_{n\in\omega} \mathbb{Z}$ as $\bigoplus\limits_{n\in\omega} \langle g_n\rangle$. We shall construct a sequence $\{F_s^e\}$ such that $e\in\overline{Cof}$ iff $\{F_s^e\}$ is a F\o lner sequence. 

For a given $s$, we use the enumeration of the set $\{(n,x): x\in W_{\varphi_e(n)}\}$ to find the element $(n_s,x)$ with the number $s$. For each $i=1,\ldots, s$ such that $i\neq n_s$ let $F_{s,i}=\{g_{i},g_{i}^2\ldots g_{i}^s\}$. For $i=n_s$ we put $F_{s,i}=\{g_{i}\}$. Let $F_s^e=\bigoplus\limits_{1}^{s} F_{s,i}$. Then in the former case  $F_s^e$ is an $s$-F\o lner set with respect to $g_i$ and in the latter case $F_s^e$ is not a $2$-F\o lner set with respect to $g_i$. This ends the construction.

\noindent \textit{Case 1.} $e\notin\overline{Cof}$. There exists $n'$ such that $W_{\varphi_{e(n')}}$ is an infinite set. Therefore there exist an increasing sequence $\{s_i\}$ and the number $i'$ such that for all $i>i'$, $F_{s_i}^e$ is not a $2$-F\o lner set with respect to $g_{n'}$. Clearly the number of a sequence $\{F_s^e\}$ does not belong to the set of numbers of a F\o lner sequences.

\noindent \textit{Case 2.} $e\in\overline{Cof}$. For all $n$, $W_{\varphi_{e(n)}}$ is a finite set. Therefore for all $n$, there exists the number $s'$ such that for all $s>s'$, $F_{s}^e$ is an $s$-F\o lner set with respect to $g_{n}$.  This sequence is a F\o lner sequence.

Since for every $e$ the number of the algorithm producing $\{F^e_s\}$ can be effectively found it follows that the set $\overline{Cof}$ is reducible to $\mathfrak{F}_{seq}(G)$, which completes the proof.

\end{proof}

\section{An effective version of Hall's Harem Theorem}
In this section we generalize the work of Kierstead  \cite{hak} 
concerning an effective version of the Hall's Theorem. 
These results will be applied in the next section 
to effective paradoxical decompositions. 
Below we follow the presentation of \cite{hak}.

A graph $\Gamma=(V,E)$ is called a \textbf{bipartite graph} if the set of vertices $V$ is partitioned into sets $A$ and $B$ in such way, that the set of edges $E$ is a subset of $A\times B$. We denote such a bipartite graph by $\Gamma=(A,B,E)$. The set $A$ (resp. $B$) is called the set of \textbf{left} (resp. \textbf{right}) \textbf{vertices}.

From now on we concentrate on bipartite graphs. 
Although our definitions concern this case they usually have obvious extensions to all ordinary graphs. Let $\Gamma=(A,B,E)$. We will say that an edge $(a,b)$ is \textbf{adjacent} to vertices $a$ and $b$. In this case we say that $a$ and $b$ are adjacent. We also say that  two edges $(a,b),(a',b')\in E$ are \textbf{adjacent} if they have a common adjacent vertex.

Given a vertex $x\in A\cup B$ the \textbf{neighbourhood} of  $x$ is a set $$N _{\Gamma}(x)=\{y\in A\cup B: (x,y)\in E\}.$$  
For subsets $X\subset A$ and $Y\subset B$, we define the neighbourhood 
$N _{\Gamma}(X)$ of $X$ and the neighbourhood $N _{\Gamma}(Y)$ of $Y$ by

$$
N _{\Gamma}(X)=\bigcup\limits_{x\in X} N _{\Gamma}(x) \text{ and } N _{\Gamma}(Y)=\bigcup\limits_{y\in Y} N _{\Gamma}(y).
$$ 
We drop the subscript $\Gamma$ if it is clear from the context.

The subset $X$ of $A$ (resp. $Y$ of $B$) is called \textbf{connected} if for all $x, x' \in X$ (resp. $y, y' \in Y$) there exist a path $x=p_0,p_1,\ldots, p_k=x'$ in $\Gamma$ such that for all $i$ $p_i\in X\cup N_{\Gamma}(X)$.

We say that $\Gamma$ is \textbf{locally finite} if the set $N(x)$ is finite for all $x\in A\cup B$. 
If $\Gamma$ is locally finite then the sets $N(X)$ and $N(Y)$ are finite for all finite subsets $X\subset A$ and $Y\subset B$.

For a given vertex $v$ a \textbf{star} of $v$ is a subgraph  
$S=(V',E')$ of $\Gamma$, with $V'=\{v\}\cup N_{\Gamma}(v)$ and $E'=\{(v,v')\in E\}$.

A \textbf{matching} (a $(1,1)$-\textbf{matching}) from $A$ to $B$ 
is a subset $M\subset E$ of pairwise nonadjacent edges. 
A matching $M$ is called \textbf{left-perfect} (resp. \textbf{right-perfect}) 
if for all $a \in A$ (resp. $b\in B$)  there exists exactly one 
$b\in B$ (resp. $a\in A$) with $(a,b)\in M$.
The matching $M$ is called \textbf{perfect} if it is both right and left-perfect.

We now introduce perfect $(1,k)$-matchings from $A$ to $B$ without defining $(1,k)$-matchings. 
We will use only perfect ones.

\begin{df}
A \textbf{perfect} $(1,k)$-\textbf{matching} from $A$ to $B$ is a set $M\subset E$ satisfying following conditions: 

\begin{enumerate}[(1)]
\item for all $a \in A$ there exists exactly $k$ vertices $b_1,\ldots b_k \in B$ such that $(a,b_1),\ldots,(a,b_k)\in M$;

\item for all $b \in B$ there is an unique vertex $a\in A$ such that $(a,b)\in M$.

\end{enumerate} 
\end{df}

The following Theorem is known as \textbf{the Hall's Harem Theorem}, 
and the first of equivalent conditions is known as \textbf{Hall's $k$-harem condition}.

\begin{thm}
Let $\Gamma=(A,B,E)$ be a locally finite graph and let $k\in \mathbb{N},\; k\geq 1$. The following conditions are equivalent:

\begin{enumerate}[(i)]

\item For all finite subsets $X\subset A$, $Y\subset B$ following inequalities holds $|N(X)|\geq k|X|$, $|N(Y)|\geq \frac{1}{k}|Y|$.

\item $\Gamma$ has a perfect $(1,k)$-matching.

\end{enumerate} 
\end{thm}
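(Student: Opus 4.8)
The plan is to prove the two implications by different methods: the easy direction (ii)$\Rightarrow$(i) by direct counting, and the substantive direction (i)$\Rightarrow$(ii) by first reducing the $(1,k)$-case to the ordinary $(1,1)$-case via a vertex-splitting device and then settling the $(1,1)$-case by a compactness argument combined with a merging of two one-sided saturations.

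For (ii)$\Rightarrow$(i), suppose $M$ is a perfect $(1,k)$-matching and fix a finite $X\subseteq A$. Each $a\in X$ is $M$-matched to exactly $k$ vertices of $B$, and by condition (2) of the definition distinct left vertices are matched to disjoint $k$-sets; hence the set of $M$-partners of $X$ has size exactly $k|X|$ and lies inside $N(X)$, giving $|N(X)|\ge k|X|$. Dually, for a finite $Y\subseteq B$ each $b\in Y$ has a unique $M$-partner $m(b)\in N(Y)$, and a fixed vertex of $A$ can serve as $m(b)$ for at most $k$ vertices $b$; therefore $|Y|\le k\,|\{m(b):b\in Y\}|\le k|N(Y)|$, i.e. $|N(Y)|\ge \tfrac1k|Y|$. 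This establishes (i).

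For (i)$\Rightarrow$(ii) I would first reduce to $k=1$. Form the bipartite graph $\Gamma'=(A',B,E')$ with $A'=A\times\{1,\dots,k\}$ and $((a,i),b)\in E'$ iff $(a,b)\in E$. This $\Gamma'$ is again locally finite, and a perfect $(1,1)$-matching of $\Gamma'$ collapses to a perfect $(1,k)$-matching of $\Gamma$ (the $k$ copies of a fixed $a$ are sent to distinct vertices of $B$ precisely because each $b$ has a unique partner). A routine check shows that (i) yields both ordinary Hall conditions for $\Gamma'$: writing $\pi$ for the projection to the $A$-coordinate, for finite $X'\subseteq A'$ we get $|N_{\Gamma'}(X')|=|N_{\Gamma}(\pi(X'))|\ge k|\pi(X')|\ge|X'|$, and for finite $Y\subseteq B$ we get $|N_{\Gamma'}(Y)|=k|N_{\Gamma}(Y)|\ge|Y|$. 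Thus it suffices to prove that a locally finite bipartite graph $\Delta=(P,Q,E)$ with $|N(X)|\ge|X|$ for all finite $X\subseteq P$ and $|N(Y)|\ge|Y|$ for all finite $Y\subseteq Q$ admits a perfect matching. By the locally finite form of Hall's marriage theorem, the first condition produces a matching $M_P$ saturating $P$ and, symmetrically, the second produces a matching $M_Q$ saturating $Q$. I then merge them: in $M_P\cup M_Q$ every vertex has degree $1$ or $2$, so each component is a finite cycle, a finite path, a one-way ray, or a two-way double ray. A parity analysis rules out finite path components, since a degree-one endpoint lying in $P$ forces its unique edge into $M_P$, and following the forced alternation of $M_P$- and $M_Q$-edges along the path yields a contradiction at the other endpoint. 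Every surviving component (an even cycle, a ray, or a double ray) carries a perfect matching, and their union is a perfect matching of $\Delta$.

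The main obstacle is the passage from the finite to the locally finite setting. The finite case of the $(1,1)$ statement is immediate from the classical marriage theorem (both Hall conditions force $|P|=|Q|$, so a saturating matching is perfect), but in the infinite case I must invoke the locally finite version of Hall's theorem, whose proof rests on a König's-lemma/compactness argument; this is exactly the point where local finiteness is indispensable (it keeps the relevant search trees finitely branching) and, I expect, the step that the effective refinement in the next section has to render constructive. A secondary technical point deserving care is the merging step: verifying that $M_P\cup M_Q$ has no finite path component is what upgrades the two separate one-sided saturations into a single perfect matching, and it is this structural fact, rather than any single clever estimate, that makes the two-sided Hall conditions combine correctly.
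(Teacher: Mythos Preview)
The paper does not prove this statement: it is quoted as the classical Hall's Harem Theorem and serves only as background for the effective version established in the theorem that follows. There is thus no paper-proof to compare your argument against.

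Your proof is correct and follows a standard route. The counting for (ii)$\Rightarrow$(i) is fine, the vertex-splitting reduction to $k=1$ together with the verification of both Hall conditions in $\Gamma'$ is clean, and the Cantor--Bernstein-style merging of the two one-sided saturating matchings $M_P$, $M_Q$ is the usual way to manufacture a two-sided perfect matching in the locally finite setting. One small imprecision worth tidying: a component of $M_P\cup M_Q$ consisting of a single edge that happens to lie in $M_P\cap M_Q$ has two degree-$1$ endpoints but is \emph{not} excluded by your parity argument (there is no alternation to contradict), so your list of ``surviving components'' should include it; of course it already carries a perfect matching, so nothing is lost. The cleanest fix is either to set aside $M_P\cap M_Q$ first, or to work in the multigraph where such an edge becomes a $2$-cycle, after which the alternation argument goes through verbatim.
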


Given a $(1,k)$-matching $M$ and a vertex $a\in A$ an $M$-star of $a$ is a graph consisting of the set of all vertices and edge adjacent to $a$ in $M$.

\begin{df} A graph $\Gamma$ is \textbf{computable} if there exists a bijective 
function $\nu: \mathbb{N}\rightarrow V$ such that the set 
$$
R:=\{(i,j): (\nu(i),\nu(j))\in E\} 
$$
is computable. 
A locally finite graph $\Gamma$ is called \textbf{highly computable} if additionally there is a recursive function $f: \mathbb{N}\rightarrow \mathbb{N}$ such that 
$f(n)=|N_{\Gamma}(\nu(n))|$ for all $n\in\mathbb{N}$. 
This definition and the three definitions below are due to Kierstead \cite{hak}.
\end{df}

\begin{df}
A bipartite graph $\Gamma=(A,B,E)$ is \textbf{computably bipartite} 
if $\Gamma$ is computable and the set of $\nu$-numbers of $A$ is computable.
\end{df}
Below we will identify the elements of $\Gamma$ with numbers.

\begin{df}\label{cpkm}
Let $\Gamma=(A,B,E)$ be a computably bipartite graph. 
A perfect $(1,k)$-matching $M$ from $A$ to $B$ is called a \textbf{computable perfect} $(1,k)$-\textbf{matching} if there is an algorithm which
\begin{itemize}
\item for each $i$ with $\nu(i)\in A$, finds the tuple $(i_1,i_2, \ldots, i_k)$ such that $(\nu(i),\nu(i_j))\in M$, for all \break $j=1,2,\ldots, k$
\item when $\nu(i)\notin A$ it finds $i'$ such that $(\nu(i'),\nu(i))\in M$.
\end{itemize}

\end{df}

The remainder of this section will be devoted to a proof that the following condition 
implies the existence of the computable perfect $(1,k)$-matching.

\begin{df} 
A bipartite graph $\Gamma=(A,B,E)$ satisfies 
the \textbf{computable expanding Hall's harem condition with respect to $k$} 
(denoted $c.e.H.h.c.(k)$), if and only if there is a recursive function 
$h: \mathbb{N} \rightarrow \mathbb{N}$ such that:
\begin{itemize}
\item $h(0)=0$
\item for all finite sets $X\subset A$, the inequality $h(n)\leq |X|$ implies $n\leq |N(X)|-k|X|$
\item for all finite sets  $Y\subset B$, the inequality $h(n)\leq |Y|$ implies $n\leq |N(Y)|-\frac{1}{k}|Y|$.
\end{itemize}
\end{df}

Clearly, if the graph $\Gamma$ satisfies the $c.e.H.h.c.(k)$, then it satisfies the Hall's $k$-harem condition. 

\begin{thm}
If $\Gamma=(A,B,E)$ is a highly computable bipartite graph satisfying 
the $c.e.H.h.c.(k)$, then $\Gamma$ has a computable perfect $(1,k)$-matching.
     
\end{thm}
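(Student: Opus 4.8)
The plan is to adapt Kierstead's stagewise matching construction to the harem setting, so that every augmentation is performed by an alternating-path search whose size is bounded by the function $h$. Since $c.e.H.h.c.(k)$ implies the Hall's $k$-harem condition, the non-effective Hall's Harem Theorem already supplies a perfect $(1,k)$-matching; the whole problem is to obtain one satisfying Definition \ref{cpkm}. First I fix a computable enumeration $v_0,v_1,\dots$ of $V=A\cup B$ — available because $\Gamma$ is computable and the set of $\nu$-numbers of $A$ is decidable — and build an increasing chain of finite partial matchings $M_0\subseteq M_1\subseteq\cdots$ in which every $a\in A$ carries at most $k$ edges and every $b\in B$ at most one. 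I dovetail two families of requirements: a left requirement asking that a given $a\in A$ eventually carry exactly $k$ edges, and a right requirement asking that a given $b\in B$ eventually be matched; the latter family produces right-perfectness and is governed by the inequality $|N(Y)|\geq\frac1k|Y|$.

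To satisfy a requirement at a deficient vertex I grow a closed alternating tree rooted there, alternating non-matching and matching edges and adjoining the unique partner of every matched vertex it reaches; the edge relation being computable and the degree function $f$ available, the tree is enumerable. An augmenting path is a branch ending at a vertex with free capacity, and flipping edge types along it raises the root's degree by one while keeping the partial matching valid. The point is that the search is effectively bounded by $h$. Writing $T$ for the left-vertices of the current tree, a blocked tree (one with no augmenting endpoint) would have all of $N(T)$ matched into $T$, giving $|N(T)|\leq k|T|-1$ and contradicting the Hall's $k$-harem condition, so an augmenting endpoint always exists; the expanding form makes this quantitative, since as soon as $|T|\geq h(1)$ the surplus $|N(T)|-k|T|$ is at least $1$ and the tree being closed forces an \emph{unmatched} right-vertex in $N(T)$. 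Hence an augmenting path is found while $|T|\leq h(1)$, and by local finiteness the right-vertices and edges it involves form a computably bounded set, so every augmentation step halts; the right requirements are handled symmetrically through the second inequality of $c.e.H.h.c.(k)$.

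The main obstacle will be to upgrade this bounded-search construction from a merely $\Delta^0_2$ object to a genuinely computable matching, as Definition \ref{cpkm} demands. The difficulty is that augmenting along a path rewrites the matches of the interior vertices, so a match assigned early can be undone later, and in general, under the bare Hall's $k$-harem condition, a highly computable graph need not admit any computable matching at all. The resolution, and the heart of Kierstead's argument, is a settling-time estimate: because every alternating search is confined to a tree with at most $h(1)$ left-vertices living, by high computability, inside a computably bounded ball, one shows that any fixed vertex can be disturbed only a number of times bounded by a computable function of its index. This is exactly where the \emph{expanding} strengthening of the Hall condition, rather than the condition itself, is indispensable. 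Once the settling bound is in place, the required algorithm is obtained by running the construction until the relevant vertex has settled and then reading off its $k$ partners, for $a\in A$, or its unique partner, for $b\in B$, which completes the proof.
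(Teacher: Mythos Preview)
Your approach diverges from the paper's (and from Kierstead's own) in a way that introduces a genuine gap. The paper does not use augmenting paths or a settling-time analysis at all. Its method is commit-and-remove: at each step one computes the finite ball of radius $\max\{2h(k)+1,3\}$ about the current vertex, finds inside that ball a $(1,k)$-matching that is perfect on the interior (existence coming from the non-effective Harem Theorem for the full graph), \emph{permanently} commits to the $k$ edges at that vertex, deletes that $M$-star from $\Gamma$, and verifies that the remaining graph again satisfies $c.e.H.h.c.(k)$ for the shifted witness $h'(n)=h(n+k)$ (with $h'(0)=0$). Back-and-forth between $A$ and $B$ then produces a perfect $(1,k)$-matching. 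Because every edge is assigned once and never revised, computability is immediate; the expanding condition is used solely to show that removing one star preserves the Hall surplus.

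Your argument hinges on the claim that each alternating search stays within a tree of at most $h(1)$ left-vertices, and this step fails. From $|T|\geq h(1)$ you get $|N(T)|\geq k|T|+1$; combined with the fact that at most $k|T|-1$ right-vertices can be matched \emph{from} $T$ (the root being deficient), this tells you only that some $b\in N(T)$ is not matched from $T$. Such a $b$ may well be matched from a left-vertex outside $T$, in which case the tree must be enlarged rather than terminated; your inference ``the tree being closed forces an unmatched right-vertex'' conflates ``not matched from $T$'' with ``unmatched''. Since the partial matching at stage $s$ is finite the search does halt, but the governing bound is the size of the matching built so far, not $h(1)$. Hence later augmenting paths become arbitrarily long, a fixed vertex can lie on paths rooted arbitrarily far away, and the promised computable settling bound is unsupported. (Your attribution of a settling-time estimate to Kierstead is also a misreading; his proof is exactly the commit-and-remove argument the paper generalises.) The repair is to abandon augmenting paths and adopt the ball-and-delete scheme above.
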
 

\begin{proof}
We extend the proof of Theorem 3 of the Kierstead's paper \cite{hak}. We fix a computable enumeration of $A$ and $B$.
Let $h$ witness the $c.e.H.h.c.(k)$ for $\Gamma$.
We begin by setting $M=\emptyset$.
At step $s$ we update already constructed $M$ in the following way. For a vertex $x_s\in A\cup B$ we construct some subgraph $\Gamma_s$ and a matching $M_s$ in $\Gamma_s$. The matching $M$ is updated by those elements of $M_s$ which contain $x_s$. The subgraph $\Gamma_s$ is constructed so that after removal of the $M_s$-star of $x_s$ from $\Gamma$, we still have a highly computable bipartite graph satisfying the $c.e.H.h.c.(k)$.
  
At the first step of the algorithm we choose $a_0$, the first element of the set $A$. 
We construct the induced subgraph $\Gamma_0=(A_0,B_0,E_0)$ so that $A_0\cup B_0$ is the set of vertices with distance of at most $\max\{2h(k)+1,3\}$ from $a_0$.
Since the graph $\Gamma$ is highly computable the graph $\Gamma_0$ is finite and can be found effectively. 
It is clear that for all vertices $v$ from $A_0$, $N_{\Gamma_{0}}(v)=N_{\Gamma}(v)$. 
Therefore, for all $X\subset\subset A_0$ the inequality $h(n)\leq |X|$ implies $n\leq |N_{\Gamma_{0}}(X)|-k|X|$.

Let $B_{S_0}$ denote the set of vertices $v\in B_0$ of the distance $\max\{2h(k)+1,3\}$ from $a_0$. 
It is clear that $N_{\Gamma_{0}}(B_0\setminus B_{S_0})=N_{\Gamma}(B_0\setminus B_{S_0})=A_0$. 
On the other hand since it can happen that $N_{\Gamma}(B_{S_0})$ is not contained in $A_0$, it is possible that there exist $Y\subset B_{S_0}$, such that $|N_{\Gamma_{0}}(Y)|\leq\frac{1}{k}|Y|$. 

Since $\Gamma$ contains a perfect $(1,k)$-matching, there exists a $(1,k)$-matching in $\Gamma_0$, that satisfies the conditions of perfect $(1,k)$-matchings for all $a\in A_0$, $b\in B_0 \setminus B_{S_0}$. 
We denote it by $M_0$. Since $\Gamma_0$ is finite, the matching $M_0$ can be obtained effectively. 
Let $\{b_{0,1},\ldots, b_{0,k}\}$ be all  elements that $(a_0,b_{0,i})$ belongs to $M_0$. 
We define $M$ to be the set of all these pairs.

Let $\Gamma'$ be a subgraph obtained from $\Gamma$ through removal of the $M_0$-star of $a_0$.
Since the sets $A\cup B$, $A$ and $E$ are computable, and the matching $M_0$ is found effectively, hence the sets $A'\cup B'$, $A'$ and $E'$ are also computable.
Therefore $\Gamma'$ is a computably bipartite graph.
Since $\Gamma'$ is locally finite and we can compute the neighbourhood of every vertex, $\Gamma'$ is highly computable.
To finish this step it suffices to show that $\Gamma'$ satisfies $c.e.H.h.c.(k)$.

Let 
$$
h'(n) = \left\{
\begin{array}{rr}
0, \quad \text{if} \quad n=0, \ \\
h(n+k), \quad \text{if} \quad n > 0.
\end{array}\right.
$$
We claim that $h'$ works for $\Gamma'$. We start with the case when $X\subset A'$ and $n>0$.
Since $|N_{\Gamma'}(X)|\geq |N_{\Gamma}(X)|-k$, then for $n\geq 1$ the inequality 
$|X|>h'(n)$ implies $|N_{\Gamma'}(X)|-k|X|\geq |N_{\Gamma}(X)|-k|X|-k\geq n$.

Let us consider the case when $n=0$ and $X$ is still a subset of $A'$. If $X$ is not connected, then its neighbourhood would be the union of nieghbourhoods of its connected subsets. Therefore without the loss of the generality, we can assume that $X$ is connected. If $X\subset A_0$, then $|N_{\Gamma'}(X)|- k|X| \geq 0$, since $M_0$ was a $(1,k)$-matching from $A_0$ to $B_0$ that was perfect for subsets of $A_0$. 

Now, let us assume that there exists $a'\in X\setminus A_0$.  
If $b_{0,1},\ldots, b_{0,k} \notin N_{\Gamma}(X)$, then $|N_{\Gamma'}(X)| = N_{\Gamma}(X)$, so $|N_{\Gamma'}(X)|-k|X|\geq 0$. Assume that for some $i\leq k$ and some $a\in X,$ there exists $(a,b_{0,i})\in E$. 
Since the distance between $a$ and $a'$ is at least $2h(k)$ we have $|X|\geq h(k)+1$. Thus $|N_{\Gamma}(X)|-k|X|\geq k$ and it follows that $|N_{\Gamma'}(X)|-k|X|\geq 0$. We conclude that the case of finite subsets of $A'$ is verified. 

Now we need to show that $\Gamma'$ satisfies 
$c.e.H.h.c.(k)$ for sets $Y\subset\subset B'$. 
We have to show that for all finite sets  $Y\subset B$, 
the inequality $h'(n)\leq |Y|$ implies $n\leq |N_{\Gamma'}(Y)|-\frac{1}{k}|Y|$.
Note $Y\subset\subset B'=B \setminus \{b_{0,1},\ldots, b_{0,k}\}$ 
and $|N_{\Gamma'}(Y)|\geq |N_{\Gamma}(Y)|-1$.

In the case $n\!>\!0$ the inequality $|Y|>h'(n)$ 
implies \!
$|N_{\Gamma'}(Y)|-\frac{1}{k}|Y|\geq |N_{\Gamma}(Y)|-\frac{1}{k}|Y|-1\geq n+k-1\geq n$.

Let us consider the case $n=0$. 
As before, we can assume that $Y$ is connected.
If $Y\subset B_0\setminus B_{S_0}$, then $|N_{\Gamma'}(Y)|- \frac{1}{k}|Y| \geq 0$, 
since $M_0$ satisfied the conditions of a perfect $(1,k)$-matching for elements of $B_0\setminus B_{S_0}$. 

Let us assume that there exists $b'\in Y\setminus (B_0\setminus B_{S_0})$. 
If $a_0\notin N_{\Gamma}(Y)$, then $N_{\Gamma'}(Y)=N_{\Gamma}(Y)$ and $|N_{\Gamma'}(Y)|- \frac{1}{k}|Y| \geq 0$. 

Assume that for some $b\in Y $ there exists the edge $(a_0,b)\in E$.
Since the distance between $b$ and $b'$ is at least $2h(k)$ we have $|Y|\geq h(k)+1$. It follows that $|N_{\Gamma}(Y)|- \frac{1}{k}|Y| \geq k$ and $|N_{\Gamma'}(X)|-\frac{1}{k}|X|\geq k-1\geq 0$.

As a result we have that the graph $\Gamma'$ satisfies $c.e.H.h.c.(k)$. To force the matching $M$ to be a perfect $(1,k)$-matching we use back and forth. Therefore we start the next step of an algorithm by choosing an element $b_{1,1}$ of $B'$.

We construct the induced subgraph $\Gamma_1=(A_1,B_1,E_1)$ so that $A_1\cup B_1$ is a set of vertices of $\Gamma'$ with distance of at most $\max\{2h'(k)+2,4\}$ from $b_{1,1}$.
Let $B_{S_1}$ denote the set of vertices of the distance $\max\{2h'(k)+2,4\}$ from $b_{1,1}$.
Since $\Gamma'$ contains a perfect $(1,k)$-matching, there exist a $(1,k)$-matching in $\Gamma_1$ that satisfies the conditions of a perfect $(1,k)$-matching for all $a\in A_1$ and $b\in B_1\setminus B_{S_1}$. We denote it by $M_1$. We choose $a_1$ with $(a_1,b_{1,1})\in M_1$. Let $\{b_{1,2},\ldots, b_{1,k}\}$ be all remaining elements that $(a_1,b_{1,i})$ belongs to $M_1$. We update $M$ by all edges adjacent to $a_1$ in $M_1$.

Let $\Gamma''$ be a subgraph obtained from $\Gamma'$ through removal of the $M_1$-star of $a_1$. Then $\Gamma''$ is also highly computable computably bipartite graph. We need to show that $\Gamma''$ satisifies $c.e.H.h.c.(k)$.

Let 
$$
h''(n) = \left\{
\begin{array}{rr}
0, \quad \text{if} \quad n=0, \ \\
h'(n+k), \quad \text{if} \quad n > 0.
\end{array}\right.
$$
To prove that $h''(n)$ works for $\Gamma''$ we use the same method as as in the case $h'(n)$ and $\Gamma'$.

We continue iteration by taking the elements of $A$ at even steps and 
the elements of $B$ at odd steps. 
At every step $n$, the graph $\Gamma^{(n)}$ satisfies the conditions for existence of perfect $(1,k)$-matchings and we update $M$ by $k$ edges adjacent to $a_n$. 
Every vertex $v$ will be added to $M$ at some step of the algorithm. 
It follows that $M$ is a perfect $(1,k)$-matching of the graph $\Gamma$. 
Effectiveness of our back and forth construction guarantees that we have an algorithm satisfying Definition \ref{cpkm}. 

\end{proof}

\section{Effective paradoxical decomposition}

Throughout this section, $(G,\nu)$ is a computable group. 
For simplicity of notation we identify the set $G$ with $\mathbb{N}$ and 
subsets $F$ of $\mathbb{N}$ with $\nu(F)\subset G$. 
As before by $x^{*}\in \mathbb{N}$ we denote a number with 
$\nu(x^* )\nu(x) =1$. 

\begin{df} \label{EPD} 
The group $G$ has an \textbf{effective paradoxical decomposition}, if there exists a finite set $K\subset G$ and two families of computable sets 
$(A_k)_{k\in K}, (B_k)_{k\in K}$, such that:
$$
G = \Big(\bigsqcup\limits_{k\in K}kA_k\Big)\bigsqcup\Big(\bigsqcup\limits_{k\in K}kB_k\Big)=\Big(\bigsqcup\limits_{k\in K}A_k\Big)=\Big(\bigsqcup\limits_{k\in K}B_k\Big).
$$
We call $(K,(A_k)_{k\in K}, (B_k)_{k\in K})$ a paradoxical decomposition of $G$.
\end{df}

\begin{thm} \label{EPDthm}
There is an effective procedure which for any finite subset $K_0 \subset G$ 
satisfying the condition: 
\begin{quote}
there is a natural number $n$ such that  for any finite subset $F\subset G$, 
there exists $k \in K_0$ such that $\frac{|F\setminus kF|}{|F|}\geq \frac{1}{n}$, 
\end{quote} 
finds a finite subset $K \subset G$ which defines 
an effective paradoxical decomposition as in Definition \ref{EPD}.
\end{thm}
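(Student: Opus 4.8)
The plan is to use the hypothesis on $K_0$ to produce, via the classical non-amenability mechanism, a bipartite graph to which the effective Hall's Harem Theorem (the last theorem proved above) applies, and then to convert the resulting computable perfect $(1,2)$-matching into the two families $(A_k)$ and $(B_k)$. Concretely, the condition on $K_0$ says precisely that there is no $n$-F\o lner set with respect to $K_0$ for some fixed $n$; this is the Tarski-style quantitative failure of amenability. The standard route to a paradoxical decomposition is to build a bipartite graph $\Gamma = (A,B,E)$ where $A = G$ is one copy of the group and $B = G \sqcup G$ is a doubled copy, with $g \in A$ joined to the two copies of $sg$ for $s$ ranging over a suitable finite symmetric set $S$ containing $K_0 \cup K_0^{-1}$ (possibly enlarged). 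A perfect $(1,2)$-matching of $\Gamma$ then assigns to each group element $g$ exactly two neighbours in the doubled copy, and reading off these matchings by the label $s$ used gives the sets $A_k, B_k$: one records, for each $k \in K$, the set of $g$ landing in the first copy under $k$, and likewise $B_k$ for the second copy. The left-perfectness gives $G = \bigsqcup A_k = \bigsqcup B_k$, and the right-perfectness (every element of each copy of $G$ hit exactly once) gives $G = \bigsqcup kA_k \sqcup \bigsqcup kB_k$, which is exactly Definition \ref{EPD}.

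First I would make the hypothesis into the combinatorial expansion estimate needed for the Hall's Harem condition. The failure of the $n$-F\o lner condition with respect to $K_0$ yields, by a standard averaging/counting argument (essentially the Tarski alternative), a uniform expansion constant: there is a finite symmetric $S \subset G$ and a constant so that $|SF| \geq 2|F|$ for all finite $F \subset G$, i.e. the left half of Hall's $2$-harem condition holds with $k=2$. I would then verify the right half ($|N(Y)| \geq \tfrac12|Y|$ for finite $Y \subset B$), which for this doubling construction is automatic because each right vertex has $S$-many neighbours on the left. The graph $\Gamma$ is highly computable: since $(G,\nu)$ is computable, the edge relation $R = \{(i,j) : \nu(j) \text{ is a chosen copy of } s\nu(i),\ s\in S\}$ is decidable, the set of left vertices is decidable, and the neighbourhood size of each vertex is the constant $|S|$, so the recursive degree function exists.

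The main obstacle is to upgrade the qualitative Hall $2$-harem condition to the \emph{computable expanding} version $c.e.H.h.c.(2)$, i.e. to exhibit a recursive $h$ with $h(0)=0$ such that $h(m)\leq |X|$ forces $m \leq |N(X)| - 2|X|$ and the dual inequality for $Y$. This is where the quantitative nature of the hypothesis is essential: the fixed witness $n$ gives not just $|SF| \geq 2|F|$ but a surplus that grows at least linearly in $|F|$ once $|F|$ is large, so a linear function $h(m) = cm$ (for a constant $c$ computed from $n$ and $|S|$) should witness the condition after checking the small cases directly. I would devote the bulk of the argument to pinning down this linear surplus and the symmetric bound for the right side. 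Once $c.e.H.h.c.(2)$ is established, the effective Hall's Harem Theorem produces a computable perfect $(1,2)$-matching $M$, and the final step is routine: set $K = S$ (or $S \cup \{1\}$ as needed to absorb the identity), and for each $k \in K$ define $A_k = \{g : (g, \text{first copy of } kg) \in M\}$ and $B_k = \{g : (g,\text{second copy of } kg)\in M\}$. Computability of $M$ (Definition \ref{cpkm}) makes each $A_k, B_k$ a computable set, and the matching axioms translate verbatim into the four disjointness/covering equalities of the paradoxical decomposition, so the whole procedure is effective in $K_0$ as claimed.
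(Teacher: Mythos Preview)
Your overall strategy matches the paper's: use the hypothesis to get a uniform expansion estimate, build a highly computable bipartite graph satisfying $c.e.H.h.c.(2)$, invoke the effective Hall Harem Theorem, and extract the families $(A_k),(B_k)$ from the resulting computable $(1,2)$-matching. However, your specific bipartite construction and the extraction step are wrong. With $A=G$ and $B=G\sqcup G$, a perfect $(1,2)$-matching gives each $g\in A$ two neighbours in $B$, but nothing forces one to lie in each copy; if both lie in the first copy then $g$ belongs to two of the $A_k$'s and none of the $B_k$'s, so $\bigsqcup_k A_k$ is not a partition of $G$. Your left-perfectness claim fails for exactly this reason, and your right-perfectness claim is also incorrect: it yields $\bigsqcup_k kA_k=G$ and $\bigsqcup_k kB_k=G$ \emph{separately} (each copy of $B$ is covered once), not the single partition $\big(\bigsqcup_k kA_k\big)\sqcup\big(\bigsqcup_k kB_k\big)=G$ required by Definition~\ref{EPD}. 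The paper instead takes $B=G$ (one copy), obtains the $(1,2)$-matching there, and splits the two matched right vertices of each $g$ by their numerical ordering ($\psi_1=\min$, $\psi_2=\max$); it is this computable labeling, not a pre-doubling of $B$, that produces the two families correctly.

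There is also a gap in producing the expanding set. Aiming only for $|SF|\ge 2|F|$ gives zero surplus $|N(X)|-2|X|\ge 0$, which is Hall's $2$-harem condition but not the \emph{expanding} one, so no linear $h$ can be read off from that inequality alone. The paper makes this step explicit and effective: from $K_1=K_0\cup\{1\}$ one has $|K_1F|\ge(1+\tfrac{1}{n})|F|$, and taking $K=K_1^{n_1}$ with $(1+\tfrac{1}{n})^{n_1}\ge 3$ yields $|KF|\ge 3|F|$, whence $|N(X)|-2|X|\ge|X|$ on the left and $|N(Y)|-\tfrac12|Y|\ge\tfrac12|Y|$ on the right, so $h(m)=2m$ witnesses $c.e.H.h.c.(2)$ directly with no case analysis. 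Your ``possibly enlarged'' $S$ must be this power, and that is precisely how $K$ is found effectively from $K_0$ and $n$.
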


\begin{proof}
The proof is an adaptation of the proof of Theorem 4.9.2 from \cite{csc}.
Consider the set $K_{1} = K_{0}\cup\{1\}$. 
For any $F\subset\subset G$ we have: 
$$
K_{1}F \supset F \text{ and } K_{1}F\setminus F=K_{0}F\setminus F. 
$$ 
Thus there is $k\in K_0$ so that  
$$
|K_{1}F|-|F|=|K_{1}F\setminus\ F|=|K_{0}F\setminus F|\geq |kF\setminus F|\geq \frac{|F|}{n}.
$$
It follows that

$$
|K_{1}F|\geq (1+\frac{1}{n})|F|.
$$

Choose $n_1\in \mathbb{N}$ such that $(1+\frac{1}{n})^{n_1}\geq 3$ and 
set $K=K_{1}^{n_1}$. 
We see that $K$ is found effectively by $K_0$. 
Note that for any $ F\subset\Gamma$ we have $|KF|\geq 3|F|$.

To find the corresponding effective paradoxical decomposition 
consider the bipartite graph $\Gamma_K(G)=(\mathbb{N}, \mathbb{N}, E)$, 
where the set $E\subset \mathbb{N} \times \mathbb{N}$ consists of all pairs $(g,h)$ 
with $h \in Kg$, where $g,h$ are viewed as elements of $G$. 
Since $G$ is computable and $K$ is finite, the graph $\Gamma_K(G)$ is computably bipartite. Since the degree of every vertex is equal to $|K|$, the graph is highly computable.

Let $F$ be a finite subset of the first copy of $G$. 
Then $|N_{\Gamma}(F)|=|KF|\geq 3|F|$. 
It follows that: 
$$
|N_{\Gamma}(F)|-2|F|\geq 3|F|-2|F|=|F|.
$$ 
Therefore for any $ n\in \mathbb{N}$ the inequality $n\leq |F|$ implies that $n\leq |N_{\Gamma}(F)|-2|F|$.

On the other hand, if we consider a finite set $F$ in the second copy of $G$, 
then any $k\in K$ satisfies $N_{\Gamma}(F)\supset k^{*}F$. 
Consequently:
$$
|N_{\Gamma}(F)|\geq |k^{*}F|=|F|\geq \frac{1}{2}|F|.
$$ 
Since the function $h(n)=2n$ is recursive, 
the graph $\Gamma_K(G)$ satisfies $c.e.H.h.c.(2)$ with respect to $h$. 
By virtue of the Effective Hall Harem Theorem, we deduce the existence of a computable perfect $(1,2)$-matching $M$ in $\Gamma_K(G)$. 
In other words, there is a computable surjective $(2\rightarrow 1)$-map 
$\phi : \mathbb{N} \rightarrow \mathbb{N}$ such that 
$n\phi(n)^{*} \in K$ for all $n\in \mathbb{N}$.   

We now define functions $\psi_1, \psi_2$ as follows: 
$$
\left\{
\begin{array}{r}
\psi_1(n)=\min(n_1,n_2) \\
\psi_2(n)=\max(n_1,n_2)
\end{array}\right. , \text{ where } \phi(n_1)=n=\phi(n_2), n_1\neq n_2.
$$
Since the function $\phi$ realizes a computable perfect $(1,2)$-matching, 
both $\psi_1$ and $\psi_2$ are recursive. 

Define $\theta_1(n):=\psi_1(n)n^{*}$, $\theta_2(n):=\psi_2(n)n^{*}$. 
Observe that $\theta_1$, $\theta_2$ are recursive and 
$\theta_1(n), \theta_2(n)\in K$ for all $n\in \mathbb{N}$. 

For each $k\in K$ define sets $A_k$ and $B_k$ in the following way:
$$
A_k=\{n\in\mathbb{N}: \theta_1(n)=k\},\; 
B_k=\{n\in\mathbb{N}: \theta_2(n)=k\}.
$$
It is clear that these sets are computable and
$$
G=\bigsqcup\limits_{k\in K}A_k=\bigsqcup\limits_{k\in K}B_k.
$$
For each $n\in A_k$, the value $\psi_1(n)$ is $k\cdot n$ under the group multiplication. Thus $\psi_1(\mathbb{N})=\bigsqcup\limits_{k\in K}kA_k$. 
Similarly we can show that $\psi_2(\mathbb{N})=\bigsqcup\limits_{k\in K}kB_k$. Since $\mathbb{N}=\psi_1(\mathbb{N})\bigsqcup\psi_2(\mathbb{N})$, we have
$$
G = \Big(\bigsqcup\limits_{k\in K}kA_k\Big)\bigsqcup\Big(\bigsqcup\limits_{k\in K}kB_k\Big).
$$
Therefore $(K,(A_k)_{k\in K}, (B_k)_{k\in K})$ is an effective paradoxical decomposition of the group $G$.
\end{proof}

\section{Complexity of paradoxical decompositions}

We preserve the assumption of Section 7. 

\begin{df}\label{elf}
Let 
$$\mathfrak{W}_{BT}=\left\{K: (K\subset\subset G)\land\exists n\in \mathbb{N} \; (\forall F \subset\subset G)(\exists k \in K)\left(\frac{|F\setminus kF|}{|F|}\geq \frac{1}{n}\right)\right\}.$$
We call this family \textbf{witnesses of the Banach-Tarski paradox}.
\end{df}

This term is justified by Theorem \ref{EPDthm} where $\mathfrak{W}_{BT}$ 
appears in the formulation.

\begin{pr}
For any computable group the family $\mathfrak{W}_{BT}$ 
belongs to the class $\Sigma^{0}_{2}$.
\end{pr}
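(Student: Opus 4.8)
The plan is to read off the arithmetical complexity directly from the defining formula in Definition \ref{elf} and check that each quantifier contributes as expected. Recall that $K$ ranges over finite subsets of $G$, which we identify (via their G\"odel numbers) with natural numbers, and likewise $F \subset\subset G$ is coded by a natural number. The defining predicate for $K \in \mathfrak{W}_{BT}$ is
\begin{equation}\label{wbtformula}
(K \subset\subset G)\;\land\;\exists n\,\forall F\,\exists k\in K\left(\frac{|F\setminus kF|}{|F|}\geq \frac{1}{n}\right).
\end{equation}
First I would dispose of the matrix. Since $(G,\nu)$ is computable, the multiplication predicate $\mathsf{MultT}$ is decidable, so for a given code of $F$, a group element $k$, and a number $n$, the set $kF$ can be computed and the rational inequality $\frac{|F\setminus kF|}{|F|}\geq \frac1n$ checked effectively. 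Moreover $k$ ranges over the finite set $K$, so the bounded quantifier $\exists k\in K$ is a finite disjunction and does not raise the complexity. Hence the innermost relation
$$
R(n,F,K):\qquad \exists k\in K\left(\frac{|F\setminus kF|}{|F|}\geq \frac1n\right)
$$
is computable (i.e. $\Delta^0_1$), uniformly in $(n,F,K)$.

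Next I would count the unbounded quantifiers. The formula \eqref{wbtformula} has the shape $\exists n\,\forall F\,R(n,F,K)$ with $R$ recursive, which is the prenex form $\exists\forall$ over a decidable matrix, hence a $\Sigma^0_2$ predicate by the standard classification of the arithmetical hierarchy. The only remaining point is the leading conjunct $(K\subset\subset G)$: in the computable setting a G\"odel number either codes a finite set of natural numbers or it does not, and this is a computable condition, so conjoining it leaves us inside $\Sigma^0_2$. This gives the claimed bound.

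I do not expect a genuine obstacle here; the content is just the verification that the matrix is decidable, which rests entirely on the decidability of $\mathsf{MultT}$ for a computable group and on the finiteness of $K$ (so that $\exists k\in K$ is bounded). The one place requiring a little care is to confirm that the $\forall F$ quantifier genuinely ranges over a computable set of codes and that, for each code, $|F|$, $|kF|$ and $|F\setminus kF|$ are all computable, so that the rational comparison is effective; once this is granted, the quantifier alternation $\exists\forall$ over a $\Delta^0_1$ matrix immediately places $\mathfrak{W}_{BT}$ in $\Sigma^0_2$.
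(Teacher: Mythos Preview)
Your proposal is correct and follows essentially the same route as the paper: both arguments observe that for a computable group the inner predicate (the inequality for all, resp.\ some, $k\in K$) is decidable uniformly in $(n,K,F)$, so that one universal and one existential quantifier over it yield a $\Sigma^0_2$ condition. The paper phrases this via the complement (showing the set of $(K,n)$ with $\forall F\,\exists k\in K$ is $\Pi^0_1$ and then projecting), but this is the same computation you carry out directly by counting quantifiers over a $\Delta^0_1$ matrix.
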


\begin{proof}
Since group $G$ is computable, for any finite subsets  $K$, $F$ of $G$, 
and any $n\in\mathbb{N}$, we can effectively check if the inequality 
$\frac{|F\setminus kF|}{|F|}< \frac{1}{n}$ holds for all $k\in K$. 
Therefore, the set of triples $(n,K,F)$ such that 
$ \frac{|F\setminus kF|}{|F|} < \frac{1}{n}$ holds for all $k\in K$ is computably enumerable. 

Since the projection of this set to the first two coordinates is also computably enumerable, the set 
$$
\mathfrak{W}_{BT}'=\{(K,n): (\forall F \subset\subset \Gamma)(\exists k\in K)(\frac{|F\setminus kF|}{|F|}\geq \frac{1}{n})\}
$$ 
belongs to the class $\Pi_{1}^{0}$. 
The set $\mathfrak{W}_{BT}$ consists of $K$ such that there exists 
$n\in \mathbb{N}$ with $(K,n)\in \mathfrak{W}_{BT}'$.  
Thus $\mathfrak{W}_{BT}$ belongs to the class $\Sigma^0_2$.
\end{proof}
\bigskip

The following question has become principal for us. 
\begin{itemize} 
\item Are there natural examples with computable/non-computable  $\mathfrak{W}_{BT}$? 
\end{itemize}

In the appendix we give an example of a finitely presented group with decidable word problem and 
non-computable  $\mathfrak{W}_{BT}$.  
In the present section we give positive examples. 
The most natural ones are provided by the following theorem.

\begin{thm}\label{fg}
The family $\mathfrak{W}_{BT}$ is computable for any finitely generated free group.
\end{thm}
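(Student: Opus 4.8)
The plan is to reduce membership in $\mathfrak{W}_{BT}$ to a decidable property of the subgroup generated by $K$. First I would unwind the definition: $K\notin\mathfrak{W}_{BT}$ means precisely that for every $n$ there is a finite $F\subset G$ with $|F\setminus kF|/|F|<\tfrac1n$ for all $k\in K$, i.e. $n$-F\o lner sets with respect to $K$ exist for all $n$. The classical relativised F\o lner criterion says that this happens if and only if the subgroup $H=\langle K\rangle$ is amenable. (The left action of $H$ on $G$ partitions $G$ into right cosets $Hg$, on each of which $H$ acts as in its left regular representation, so a F\o lner set for $K$ in $G$ can be cut down coset by coset, and by an averaging argument over $k\in K$, to a genuine F\o lner set for $H$; the converse is immediate, since F\o lner sets of $H$ already sit inside $G$.) Hence $K\in\mathfrak{W}_{BT}$ if and only if $\langle K\rangle$ is non-amenable, exactly as indicated in the introduction.

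Next I would exploit the structure of free groups. By the Nielsen--Schreier theorem every subgroup of a free group is free, so $\langle K\rangle$ is free. A free group is amenable precisely when it is cyclic (trivial or infinite cyclic): a free group of rank at least $2$ contains $\mathbb{F}_2$ and is therefore non-amenable, whereas $\{1\}$ and $\mathbb{Z}$ are amenable. Combining this with the previous paragraph, for every $K\subset\subset G$ we get
$$
K\in\mathfrak{W}_{BT}\iff \langle K\rangle \text{ is not cyclic}\iff \operatorname{rank}\langle K\rangle\geq 2.
$$
Thus the whole problem collapses to deciding, from a finite list of reduced words $K$, whether the rank of the subgroup they generate is at most $1$ or at least $2$.

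Finally I would supply the decision procedure. Given $K$ as a finite set of reduced words in the free generators of $G$, Nielsen reduction (equivalently, the Stallings folding algorithm applied to the wedge of loops labelled by the elements of $K$) computes in finitely many steps a free basis of $\langle K\rangle$, and the size of that basis is $\operatorname{rank}\langle K\rangle$. The algorithm then outputs ``$K\in\mathfrak{W}_{BT}$'' when this rank is $\geq 2$ and ``$K\notin\mathfrak{W}_{BT}$'' otherwise. Since this is a total algorithm deciding membership, $\mathfrak{W}_{BT}$ is computable.

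The effectiveness of Nielsen reduction and the amenability facts for free groups are standard and routine. The only genuinely load-bearing step is the relativised F\o lner equivalence of the first paragraph, which converts the quantifier-heavy definition of $\mathfrak{W}_{BT}$ into amenability of $\langle K\rangle$; once that translation is secured, the finitely generated free case follows from classical effective subgroup theory.
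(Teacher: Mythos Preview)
Your proof is correct and shares its backbone with the paper: both reduce the question to Proposition~\ref{fif} (which you essentially rediscover and sketch), namely $K\in\mathfrak{W}_{BT}$ iff $\langle K\rangle$ is non-amenable, and then use that subgroups of free groups are free and amenable only when cyclic.

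The difference lies in the decision procedure. You compute the rank of $\langle K\rangle$ via Nielsen reduction or Stallings foldings and test whether it is at least~$2$. The paper uses a more elementary criterion: $K\in\mathfrak{W}_{BT}$ iff there exist $x,y\in K$ with $xy\neq yx$. This works because in a free group two elements commute iff they are powers of a common element, so pairwise commutativity of $K$ forces $\langle K\rangle$ to be cyclic, while a single non-commuting pair already generates a rank-$2$ free subgroup. Thus the paper only needs to check finitely many word-problem instances of the form $xy=yx$ for $x,y\in K$, which is lighter than running a full Nielsen reduction. Your approach, on the other hand, produces strictly more information (an explicit free basis and the exact rank) and generalizes more readily to settings where a simple commutativity test would not suffice; indeed, the paper's subsequent extension to fully residually free groups again hinges on the same commutator test, so the elementary route is well adapted to that generalization.
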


The proof of this theorem is based  on some  reformulation of witnessing. 
It belongs to M. Cavaleri. It simplifies our original argument. 

\begin{pr}\label{fif}

Let $G$ be a group and $K \subset\subset G$.  
Then $K \in \mathfrak{W}_{BT}$ if and only if $\langle K \rangle$ 
is a non-amenable subgroup of $G$. 
\end{pr}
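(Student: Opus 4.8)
The plan is to reformulate membership in $\mathfrak{W}_{BT}$ directly in terms of Følner sets and then pass between $G$ and $\langle K\rangle$. Unwinding Definition~\ref{elf}, the statement $K \notin \mathfrak{W}_{BT}$ says exactly that for every $n\in\mathbb{N}$ there is a finite $F\subset G$ with $\frac{|F\setminus kF|}{|F|}<\frac1n$ for all $k\in K$; equivalently, $G$ admits, for each $n$, an $n$-Følner set with respect to $K$. So I must prove: $\langle K\rangle$ is amenable if and only if $G$ carries arbitrarily good Følner sets with respect to $K$, and then read off the proposition by contraposition.

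The forward implication ($\langle K\rangle$ amenable $\Rightarrow K\notin\mathfrak{W}_{BT}$) is immediate. If $\langle K\rangle$ is amenable then, applying the Følner condition inside $\langle K\rangle$ to the finite set $K\subset\langle K\rangle$, for each $n$ there is a finite $F\subset\langle K\rangle$ with $\frac{|F\setminus kF|}{|F|}<\frac1n$ for all $k\in K$ (pass to a $2n$-Følner set to make the inequality strict). Such an $F$ is also a finite subset of $G$, and it witnesses $K\notin\mathfrak{W}_{BT}$.

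The substance is the converse. Suppose that for each $n$ we are given a finite $F\subset G$ with $\frac{|F\setminus kF|}{|F|}<\frac1n$ for all $k\in K$. The key point is that left multiplication by elements of $K$ (indeed of $\langle K\rangle$) does not preserve the left cosets of $\langle K\rangle$, but it does preserve its right cosets $\langle K\rangle g$, since $k\langle K\rangle=\langle K\rangle$. I would decompose $F=\bigsqcup_j F_j$ along the finitely many right cosets meeting $F$, so that $|F\setminus kF|=\sum_j|F_j\setminus kF_j|$ for every $k\in K$. Summing the defect inequality over $k\in K$ gives $\sum_j\big(\sum_{k\in K}|F_j\setminus kF_j|\big)<\frac{|K|}{n}\sum_j|F_j|$, and an averaging (pigeonhole) argument produces a single index $j$ with $\sum_{k\in K}|F_j\setminus kF_j|<\frac{|K|}{n}|F_j|$; hence $\frac{|F_j\setminus kF_j|}{|F_j|}<\frac{|K|}{n}$ simultaneously for all $k\in K$. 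Writing $F_j\subset\langle K\rangle g_j$ and right-translating by $g_j^{-1}$, the set $F'=F_jg_j^{-1}\subset\langle K\rangle$ satisfies $|F'\setminus kF'|=|F_j\setminus kF_j|$ and $|F'|=|F_j|$, so $F'$ is a finite subset of $\langle K\rangle$ with $\frac{|F'\setminus kF'|}{|F'|}<\frac{|K|}{n}$ for all $k\in K$. Letting $n\to\infty$ yields, inside $\langle K\rangle$, arbitrarily good Følner sets with respect to the generating set $K$. To conclude amenability I would upgrade this to the full Følner condition using the standard estimates $|F'\setminus s^{-1}F'|=|F'\setminus sF'|$ and $|F'\setminus s_1\cdots s_mF'|\le\sum_{i=1}^m|F'\setminus s_iF'|$: every element of a given finite $D\subset\langle K\rangle$ is a word of length at most some $L$ in $K\cup K^{-1}$, so $\frac{|F'\setminus dF'|}{|F'|}<L\frac{|K|}{n}$ for all $d\in D$, which is below any prescribed $\varepsilon$ once $n$ is large.

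The main obstacle I anticipate is precisely the coset bookkeeping in the converse: one must notice that right cosets, not left, are the invariant objects for left multiplication by $K$, and then guarantee that a single coset piece $F_j$ is simultaneously Følner for all of $K$ — which is why the defects are summed over $k\in K$ at the cost of a harmless factor $|K|$ that is absorbed as $n\to\infty$. The passage from a generating set to arbitrary finite subsets is routine but should be stated explicitly, since $K\in\mathfrak{W}_{BT}$ only controls translates by $K$ itself.
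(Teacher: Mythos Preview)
Your argument is correct and follows essentially the same route as the paper: decompose the F\o lner set along right cosets of $\langle K\rangle$, sum the defects over $k\in K$, apply pigeonhole to select one coset piece, and right-translate it into $\langle K\rangle$. The paper phrases the decomposition via a transversal $T$ (writing the pieces as $F_m t^{-1}\cap\langle K\rangle$) and absorbs the factor $|K|$ up front by starting from an $m$-F\o lner set with $m=n|K|$, but this is only a cosmetic difference; your explicit treatment of the passage from the generating set $K$ to arbitrary finite $D\subset\langle K\rangle$ via the inequalities $|F'\setminus s^{-1}F'|=|F'\setminus sF'|$ and $|F'\setminus s_1\cdots s_mF'|\le\sum_i|F'\setminus s_iF'|$ is a point the paper leaves implicit.
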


\begin{proof} 
The necessity is obvious. 
Assume that $K \notin \mathfrak{W}_{BT}$. It follows that for every $n$ there exists set $F_n$ such that $F_n \in \mathfrak{F}$\o$l_{G, K}(n)$.
Set $n\in \mathbb{N}$. Let $m=n|K|$. We now follow a proof of Proposition 9.2.13 from \cite{cor} to show that there exists $t_0\in G$ such that the set $F_mt_0^{-1} \cap \langle K \rangle =\{ k\in \langle K \rangle: kt_0 \in F_m  \}$ is an $n$-F\o lner for $K$. 
Let $T\subset G$ be a complete set of representatives of the right cosets of $\langle K \rangle$ in $G$. Clearly, every $g\in G$ can be uniquely written in the form $g=ht$ with $h\in \langle K \rangle$ and $t\in T$. We then have:

\begin{equation}\label{el1}
|F_m|= \sum\limits_{t\in T}|
F_mt^{-1} \cap \langle K \rangle|
\end{equation}
For every $x\in K$, we have $xF_m= \bigsqcup\limits_{t\in T}(xF_mt^{-1} \cap \langle K \rangle)t$, hence:

$$
xF_m\setminus F_m = \bigsqcup\limits_{t\in T}((xF_mt^{-1} \cap \langle K \rangle)\setminus (F_mt^{-1} \cap \langle K \rangle))t.
$$
This gives us:

\begin{equation}\label{el2}
|xF_m\setminus F_m|= \sum\limits_{t\in T}|(xF_mt^{-1} \cap \langle K \rangle)\setminus (F_mt^{-1} \cap \langle K \rangle)|.
\end{equation}
Since for all $x\in K$,
$$
|xF_m\setminus F_m|\leq \frac{|F_m|}{m},
$$
using (\ref{el1}) and (\ref{el2}), we get

$$\sum\limits_{t\in T}|(KF_mt^{-1} \cap \langle K \rangle)\setminus (F_mt^{-1} \cap \langle K \rangle)|=\sum\limits_{t\in T}|\bigcup\limits_{x\in K} ((xF_mt^{-1} \cap \langle K \rangle)\setminus (F_mt^{-1} \cap \langle K \rangle))|\leq \frac{|K|}{m}\sum\limits_{t\in T}|
F_mt^{-1} \cap \langle K \rangle|$$
By the pigeonhole principle, there exists $t_0\in T$ such that the set  $|(KF_mt_0^{-1} \cap \langle K \rangle)\setminus (F_mt_0^{-1} \cap \langle K \rangle)|\leq \frac{1}{n}|F_mt_0^{-1} \cap \langle K \rangle|$. Clearly $F_mt_0^{-1} \cap \langle K \rangle$ is an $n$-F\o lner set with respect to $K$. Since $n$ was arbitrary, $\langle K \rangle$ is amenable, a contradiction.
\end{proof}

\bigskip

\begin{proof} {\em (Theorem \ref{fg}).} 
Let $\mathbb{F}$ be a finitely generated free group. 
Since it is computable, the equation $xy = yx$  can be effectively verified for every $x,y\in \mathbb{F}$. 
We will show that $K\in \mathfrak{W}_{BT}$ if and only 
if there exist $ x,y\in K$ such that $ xy\neq yx$. 
This will give the result. 

$(\Rightarrow)$
Let us assume that $ xy=yx$ for every $x,y\in K$. 
Since $\mathbb{F}$ is a free group, there exists $z\in\mathbb{F}$ such 
that all words from $K$ are powers of $z$. 
Since the subgroup $\langle z \rangle$ is cyclic, the subgroup $\langle K\rangle$ is amenable and 
for every $n$ there is a finite set $F$, which is an $n$-F\o lner with respect to $K$. 
Clearly $K\notin \mathfrak{W}_{BT}$. 

$(\Leftarrow)$
Let us assume that there exist $x,y\in K$ with $ xy\neq yx$. 
Then $x,y$ generate a free subgroup of $\mathbb{F}$ of rank $2$. 
By Proposition \ref{fif} there is a natural number $n$ 
such that $\mathfrak{F}$\o$l_{\mathbb{F}, \{x,y\}}(n)=\emptyset$. 
Thus $\mathfrak{F}$\o$l_{\mathbb{F}, K}(n)$ is also empty.

\end{proof}

We remind the reader that a group $G$ is called \textbf{fully residually free} 
if for any finite collection of nontrivial elements 
$g_1 ,\ldots,g_n\in G\setminus\{1\}$ there exists a homomorphism 
$\phi:G\rightarrow \mathbb{F}$ onto a free group $\mathbb{F}$ such that 
$\phi(g_1)\neq 1,\ldots,\phi(g_n)\neq 1$, \cite{kap}. 
The class of fully residually free groups as well as residually free groups 
has deserved a lot of attention mainly in connection with 
algorithmic and model-theoretic investigations in group theory, 
see for example \cite{KhM} and \cite{Sela}. 

\bigskip

\begin{thm}
The family $\mathfrak{W}_{BT}$ is computable for any computable 
fully residually free group.
\end{thm}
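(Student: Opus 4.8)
The plan is to reduce the problem, via Proposition \ref{fif}, to deciding amenability of the finitely generated subgroup $\langle K \rangle$, and then to exploit the structure theory of fully residually free groups to turn this into an effective test. By Proposition \ref{fif}, $K \in \mathfrak{W}_{BT}$ if and only if $\langle K \rangle$ is non-amenable, so it suffices to describe an algorithm that, given a finite set $K \subset\subset G$ inside a computable fully residually free group $G$, decides whether $\langle K \rangle$ is amenable. The key structural fact I would invoke is the Tits-alternative-type dichotomy specific to this class: a finitely generated subgroup of a fully residually free group is either virtually abelian (hence amenable) or contains a nonabelian free subgroup (hence non-amenable). In fact, for fully residually free groups something stronger holds, namely any finitely generated amenable subgroup is abelian, since these groups are commutative-transitive and contain no nonabelian free-abelian-by-anything pathologies; the amenable finitely generated subgroups are precisely the abelian ones.

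Granting this dichotomy, the algorithm becomes transparent. First I would observe that, because $G$ is computable, the relation $xy = yx$ is decidable, so I can effectively check whether all pairs of elements of $K$ commute. The claim I want to establish is that $K \notin \mathfrak{W}_{BT}$ if and only if the elements of $K$ pairwise commute. The direction $(\Leftarrow)$ is the easy one: if all elements of $K$ commute, then in a fully residually free group commutativity is transitive, so $\langle K \rangle$ is abelian, hence amenable, and by Proposition \ref{fif} we conclude $K \notin \mathfrak{W}_{BT}$. Commutative transitivity is standard for this class (it follows from residual freeness together with the corresponding property of free groups).

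For the direction $(\Rightarrow)$ I would argue that if some pair $x, y \in K$ fails to commute, then $\langle x, y \rangle$ is nonabelian; in a fully residually free (indeed a commutative-transitive, two-generated nonabelian) subgroup this forces $\langle x, y\rangle$ to contain a nonabelian free subgroup. To make this concrete I would use the defining homomorphism property: taking the finitely many nontrivial basic commutator elements witnessing noncommutativity, there is a homomorphism $\phi : G \rightarrow \mathbb{F}$ onto a free group that keeps $\phi(x)\phi(y)\phi(x)^{-1}\phi(y)^{-1} \neq 1$, so $\phi(x), \phi(y)$ generate a nonabelian subgroup of a free group, which is itself free of rank $2$. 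A nonamenable quotient of $\langle K \rangle$ forces $\langle K \rangle$ to be nonamenable, so $K \in \mathfrak{W}_{BT}$. Thus the test ``do all elements of $K$ commute?'' decides membership in $\mathfrak{W}_{BT}$, and since this test is effective by computability of $G$, the family $\mathfrak{W}_{BT}$ is computable.

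The main obstacle I anticipate is justifying the two structural inputs cleanly: namely that commutativity is transitive in a fully residually free group, and that a noncommuting pair generates a subgroup admitting a nonamenable (free) quotient through the defining homomorphisms. The first is a standard consequence of residual freeness, and the second is exactly the mechanism the paper already used in the free-group case (Theorem \ref{fg}), now transported through a single separating homomorphism $\phi : G \to \mathbb{F}$. I would therefore phrase the proof so that it runs almost verbatim as Theorem \ref{fg} once the homomorphism $\phi$ is produced, the only genuinely new ingredient being the appeal to the fully residually free hypothesis to manufacture $\phi$ and to guarantee commutative transitivity.
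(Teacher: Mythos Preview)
Your proposal is correct and follows essentially the same route as the paper: reduce via Proposition~\ref{fif} to deciding amenability of $\langle K\rangle$, then show that $K\in\mathfrak{W}_{BT}$ iff some pair in $K$ fails to commute, using the fully residually free hypothesis to map a noncommuting pair onto a rank-$2$ free subgroup of a free group. One minor remark: your appeal to commutative transitivity in the $(\Leftarrow)$ direction is unnecessary---if the generators of a group pairwise commute then the group they generate is abelian outright---and the paper simply says this; conversely, your $(\Rightarrow)$ argument via ``nonamenable quotient implies nonamenable'' is slightly cleaner than the paper's assertion that $\langle x,y\rangle$ is itself free of rank~$2$, which it states without further justification.
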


\begin{proof}
Let $(G,\nu)$ be a computable fully residually free group. 
Since $(G,\nu)$ is computable,  
it suffices to show that $K\in \mathfrak{W}_{BT}$ if and only if there exist 
$ x,y\in K$ such that $[x,y] \neq 1$.

$(\Rightarrow)$ Let us assume that $[x,y] = 1$ for all $x,y \in K$. 
Therefore subgroup $\langle K\rangle$ is a finitely generated abelian group. 
Thus it is amenable and $K\notin \mathfrak{W}_{BT}$.

$(\Leftarrow)$ Let us assume that there exist $x,y \in K$ with $[x,y] \neq 1$. 
Since $x,y,[x,y]$ are nontrivial elements of $G$ we have  
$\phi: G\rightarrow F_2$ such that 
$\phi(x)\neq\phi(y)\neq\phi([x,y])\neq 1$. 
Clearly, $\langle\phi(x), \phi(y)\rangle$ is a free group of rank $2$. 
Thus $\langle x,y \rangle$ is also a free subgroup of rank $2$. 
It remains to apply Proposition \ref{fif} exactly as in the proof of Theorem \ref{fg}. 

\end{proof}

\newpage
\includepdf[pages=1-7]{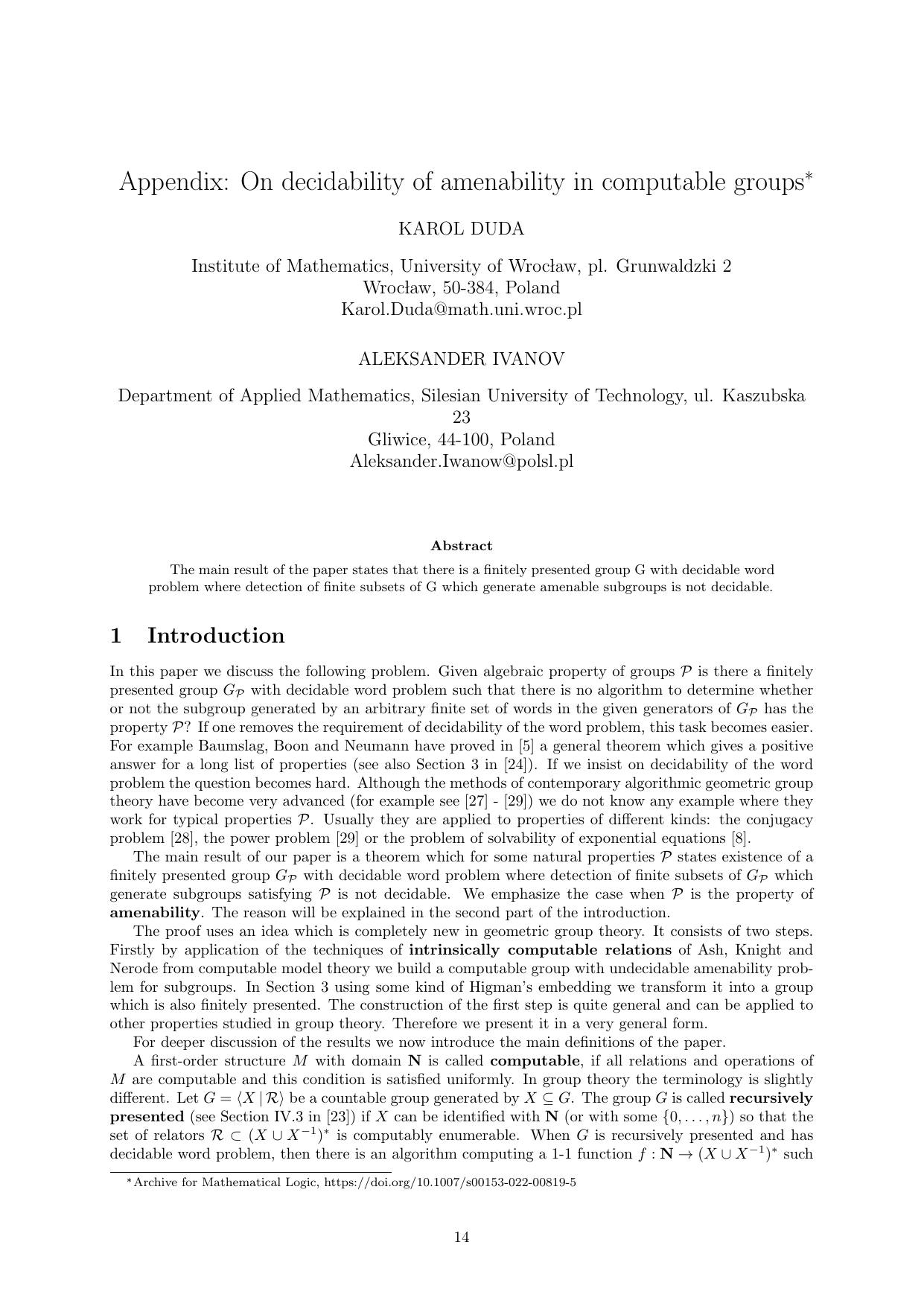}

\section*{Acknowledgements} 
The authors are grateful to M. Cavaleri, T. Ceccherini-Silberstein and L. Ko{\l}odziejczyk for reading the paper and helpful remarks.  
We also thank M. Sapir for right advice concerning Higman's embedding.  
The authors are grateful to the referee for remarks which substantionally improved the exposition. 

This research was partially supported by (Polish) Narodowe Centrum Nauki, grant UMO--2018/30/M/ST1/00668.

\end{document}